\documentclass[11pt]{article}
\usepackage{amsmath,amssymb,amsthm}
\usepackage{mathtools}
\usepackage{mathdots}
\usepackage{hyperref,url}
\hypersetup{
	colorlinks=true,
  linkcolor=black,          
  citecolor=black,         
  filecolor=black,      
  urlcolor=black           
}

\newcommand{\tr}{\operatorname{tr}}

\newcommand{\PERM}{\textnormal{perm}}

\DeclareMathOperator{\diag}{diag}
\newcommand{\NN}{\mathbb{N}}
\newcommand{\ZZ}{\mathbb{Z}}
\newcommand{\QQ}{\mathbb{Q}}
\newcommand{\RR}{\mathbb{R}}

\newcommand{\seqnum}[1]{\href{https://oeis.org/#1}{\rm \underline{#1}}}

\title{Proofs of several OEIS conjectures on determinants and permanents}
\author{ Sela Fried\\ Department of Computer Science, Israel Academic College\\ Ramat Gan, Israel\\ \texttt{friedsela@gmail.com} }

\date{}

\theoremstyle{plain}
\newtheorem{theorem}{Theorem}
\newtheorem{lemma}{Lemma}
\newtheorem{corollary}{Corollary}
\newtheorem{example}{Example}
\newtheorem{remark}{Remark}

\begin{document}
\maketitle

\begin{abstract} 
We prove several conjectures recorded in the On-Line Encyclopedia of Integer Sequences. The conjectures considered here concern determinants and permanents of special matrices, such as Toeplitz matrices, cross matrices, Kronecker powers of matrices, and matrices whose entries are defined by powers of differences. As tools we use row and column operations, block determinant formulas, Cauchy determinants, Sylvester's determinant theorem, and $LU$-factorizations. We also obtain closed-form formulas for several related integer sequences for which no such formulas were conjectured. 
\end{abstract} 

\noindent\textbf{2020 Mathematics Subject Classification.} Primary 15A15; Secondary 15B05, 11B83. 

\medskip 

\noindent\textbf{Keywords.} Cauchy matrices, determinants, Dowling numbers, Kronecker products, OEIS conjectures, permanents, Toeplitz matrices.

\section{Introduction} 
The On-Line Encyclopedia of Integer Sequences (OEIS) \cite{Sl} is a rich source of experimentally discovered patterns and conjectures. In previous works we settled several conjectures from the OEIS concerned with combinatorics, number theory, and symbolic enumeration \cite{FriedSeveralOEIS}, \cite{FriedTenOEIS}. Here we continue this line of work, focusing on conjectures involving determinants and permanents of structured matrices. Structured matrices such as Toeplitz matrices, Cauchy matrices, tridiagonal matrices, and matrices with prescribed sparsity patterns play a central role throughout enumerative combinatorics and linear algebra. Many sequences in the OEIS correspond to the determinants or permanents of such matrices and are sometimes accompanied by conjectured closed-form formulas. The goal of this work is to give rigorous proofs for some of these formulas. Occasionally, we provide formulas for sequences for which no formula was conjectured.

We begin with a correction of a conjecture concerned with the permanent of a matrix, defined using the floor function (\seqnum{A000051}). We then prove a determinant formula related to the Dowling numbers (\seqnum{A007405}). The next sections are concerned with the determinants of special matrices: an almost cross matrix (\seqnum{A071999}); a symmetric Toeplitz matrix (\seqnum{A083392}); a symmetric matrix with entries $|i^2-j^2|$ (\seqnum{A085799}); a Toeplitz matrix (\seqnum{A323254}); a symmetric matrix defined using the $\min$ function (\seqnum{A351154}); a Hermitian Toeplitz matrix (\seqnum{A359559}). We then prove conjectures of Sun related to a family of Toeplitz-type determinants in \seqnum{A355175} and \seqnum{A355326}. Finally, we prove a conjecture related to binary matrices (\seqnum{A250742}), and a formula for the determinant of principal submatrices of Kronecker powers (\seqnum{A094384}). 

All proofs are self-contained. In the next section we fix the notation and recall well-known results that we use.

\section{Preliminaries}

We let $\NN$ denote the set of natural numbers $\{1,2,\ldots\}$ and set $\NN_0=\NN\cup\{0\}$. For $m\in \NN_0$ we set $[m]=\{1,2,\ldots,m\}$, with $[0]=\emptyset$. For a set $X$ we write $\# X$ for the number of elements of $X$. For $i,j\in\NN_0$, the Kronecker delta $\delta_{ij}$ is equal to $1$ if $i=j$ and $0$ otherwise. If $A$ is a matrix, we write $A(i,j)$ for the entry of $A$ in the $i$th row and the $j$th column. We denote by $I_n$ the identity matrix of size $n$. We write $A^T$ for the transpose of $A$. For a square matrix $A$ we write $\tr(A)$ for the trace of $A$. For $x \in\RR^n$ we write $\diag(x)$ for the diagonal matrix of size $n$ with the entries of $x$ on the main diagonal. For $n\in\NN$ we write $S_n$ for the symmetric group of degree $n$. If $A$ is a square matrix of size $n$, we write $\PERM(A)$ for the permanent of $A$, i.e.,
\begin{equation}\label{zp1}
\PERM(A)
= \sum_{\sigma\in S_n} \prod_{j=1}^n A(j,\sigma(j)).
\end{equation}

\begin{lemma}\label{l1}

\begin{enumerate}
\item Let $x\in\RR$. Then
\begin{equation}\label{pp1}
\lfloor 1-x \rfloor = 1-\lceil x \rceil.
\end{equation}
\item Let $n\in\NN$ and let $k\in\ZZ$. Then
\begin{equation}\label{pp11}
\left\lceil \frac{k}{n} \right\rceil - 1 =\left\lfloor \frac{k-1}{n} \right\rfloor.
\end{equation}
\end{enumerate}
\end{lemma}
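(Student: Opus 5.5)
For part (1), I would use the standard identity $\lfloor -y\rfloor = -\lceil y\rceil$ for every $y\in\RR$, together with the fact that adding an integer commutes with the floor: $\lfloor m+y\rfloor = m+\lfloor y\rfloor$ for $m\in\ZZ$. Taking $m=1$ and $y=-x$ gives
\[
\lfloor 1-x\rfloor = 1+\lfloor -x\rfloor = 1-\lceil x\rceil .
\]
To stay self-contained, I would justify $\lfloor -y\rfloor=-\lceil y\rceil$ directly from the definitions. Let $c=\lceil y\rceil$, the unique integer with $c-1<y\le c$. Negating gives $-c\le -y<-c+1$, which is exactly the characterization of $\lfloor -y\rfloor=-c$.

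For part (2), I would argue by the defining inequalities again. Put $q=\lceil k/n\rceil$, so that $q-1<k/n\le q$. Multiplying by $n>0$ gives $n(q-1)<k\le nq$. Because $k$ and $n(q-1)$ are integers, the strict inequality $n(q-1)<k$ is equivalent to $n(q-1)\le k-1$. Likewise, $k\le nq$ is equivalent to $k-1<nq$. Hence
\[
n(q-1)\le k-1< n(q-1)+n .
\]
Dividing by $n$ gives $q-1\le (k-1)/n<q$, which says precisely that $\lfloor (k-1)/n\rfloor = q-1$. This is the claim.

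Neither step is a real obstacle. The only point needing care is in part (2), where the passage from a strict to a non-strict inequality uses integrality of $k$ and $n$; this is why the hypotheses $k\in\ZZ$ and $n\in\NN$ are needed. Without integrality the identity fails, for example with $k=1/2$ and $n=1$.
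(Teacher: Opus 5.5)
Your proposal is correct and follows essentially the same route as the paper: in both parts you start from the defining inequalities of the ceiling, and in part 2 you multiply by $n$ and use integrality to pass from $n(q-1)<k$ to $n(q-1)\le k-1$, exactly as the paper does. The only cosmetic difference is that in part 1 you route the argument through $\lfloor -y\rfloor=-\lceil y\rceil$ plus integer translation invariance, whereas the paper shifts the inequalities for $1-x$ directly.
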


\begin{proof}
\begin{enumerate}
\item Let $n=\lceil x\rceil$. Then
$n-1<x\le n$.
Thus, $-n\le -x<1-n$ and therefore
$1-n\le 1-x<2-n$. Hence
$\lfloor 1-x\rfloor=1-n=1-\lceil x\rceil$.

\item Set $t = \lceil k/n \rceil$.
Then $t-1 < k/n \leq t$ and therefore $n(t-1) < k \leq nt$. Since both $k$ and $n(t-1)$ are integers, necessarily
$n(t-1)+1\leq k$.
Thus $t-1\leq (k-1)/n$.
On the other hand, from $k\leq nt$ it follows
$(k-1)/n \leq (nt-1)/n< t$. It follows that $t-1 \leq  (k-1)/n < t$, meaning that 
\[
\left\lfloor \frac{k-1}{n} \right\rfloor = t-1
= \left\lceil \frac{k}{n} \right\rceil - 1,
\]
as asserted.\qedhere
\end{enumerate}
\end{proof}

A cross matrix is a matrix in which the elements outside the main and secondary diagonals are all zero. The following result, which gives the determinant of a cross matrix, is Theorem 3.1 in \cite{LiuCross}.
\begin{lemma}\label{l171999}
Let $n\in\NN$ and consider a cross matrix $X$ of size $n$ 
\[
X=\begin{pmatrix}
x_{11} & 0      & 0 & \cdots & 0 & x_{1n}\\
0  & x_{22} & 0  & \cdots & x_{2,n-1} & 0\\
0  & 0 & x_{33} & \iddots & 0          & 0\\
\vdots & \vdots & & \ddots & \vdots       & \vdots\\
0      & x_{n-1,2} & 0   & \cdots & x_{n-1,n-1}  & 0\\
x_{n1} & 0      & 0      & \cdots & 0            & x_{nn}
\end{pmatrix}.
\] Write $n=2k+\varepsilon$, where $k\in\NN_0$ and $\varepsilon\in\{0,1\}$. Then
\[
\det(X)=
\begin{cases}
\displaystyle \prod_{i=1}^{k}(x_{ii}x_{n+1-i,n+1-i}-x_{i,n+1-i}x_{n+1-i,i}),
& \textnormal{if $\varepsilon=0$},\\
\displaystyle x_{k+1,k+1}\prod_{i=1}^{k}(x_{ii}x_{n+1-i,n+1-i}-x_{i,n+1-i}x_{n+1-i,i}),
& \textnormal{if $\varepsilon=1$}.
\end{cases}
\] In particular, if $x_{k+1,k+1}=1$, then
\begin{equation}\label{e171999}
\det(X)=\prod_{i=1}^{\lfloor\frac{n}{2}\rfloor}(x_{ii}x_{n+1-i,n+1-i}-x_{i,n+1-i}x_{n+1-i,i}).
\end{equation}
\end{lemma}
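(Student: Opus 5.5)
The idea is to conjugate $X$ by a permutation matrix that makes it block diagonal with $2\times 2$ blocks, plus one $1\times 1$ block when $n$ is odd. The nonzero entries of $X$ lie only in positions $(i,i)$, $(i,n+1-i)$, $(n+1-i,i)$ and $(n+1-i,n+1-i)$. Hence for each $i\in[k]$ the coordinate pair $\{i,n+1-i\}$ spans a subspace invariant under $X$. When $\varepsilon=1$, the middle index $k+1$ (which satisfies $k+1=n+1-(k+1)$) is a singleton invariant coordinate whose only nonzero entry is $x_{k+1,k+1}$.

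Concretely, let $\pi\in S_n$ list the indices in the order $1,n,2,n-1,\ldots,k,k+1$, together with $k+1$ last when $\varepsilon=1$, and let $P$ be the associated permutation matrix. Then $P^TXP$ is the block diagonal matrix with blocks
\[
B_i=\begin{pmatrix} x_{ii} & x_{i,n+1-i}\\ x_{n+1-i,i} & x_{n+1-i,n+1-i}\end{pmatrix},\qquad i\in[k],
\]
followed by the $1\times 1$ block $(x_{k+1,k+1})$ if $\varepsilon=1$. This holds because every entry of $P^TXP$ equals $X(\pi(a),\pi(b))$, and such an entry can be nonzero only if $\pi(a)$ and $\pi(b)$ lie in the same pair $\{i,n+1-i\}$, that is, only if $a$ and $b$ fall in the same diagonal block.

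Since $\det(P^T)\det(P)=(\pm1)^2=1$, we get $\det(X)=\det(P^TXP)$. The determinant of a block diagonal matrix is the product of the determinants of its blocks, and $\det(B_i)=x_{ii}x_{n+1-i,n+1-i}-x_{i,n+1-i}x_{n+1-i,i}$. This gives both cases of the formula. For \eqref{e171999}, note that $k=\lfloor n/2\rfloor$ in both cases. When $\varepsilon=1$ the extra factor is $x_{k+1,k+1}=1$, and when $\varepsilon=0$ there is no extra factor, so the two cases merge into the single product.

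An alternative is induction on $n$ via Laplace expansion along the first row and then the last row, which peels off the outer pair and leaves a cross matrix of size $n-2$. The only real care needed in either route is the bookkeeping: checking that the block decomposition (or the sign in the expansion) is correct, and handling the overlap at the middle index when $n$ is odd. Neither step is a serious obstacle.
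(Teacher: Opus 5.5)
Your proof is correct and complete. The paper gives no argument for this lemma; it quotes it as Theorem 3.1 of the cited work on cross matrices. Your permutation-similarity argument therefore supplies a self-contained proof that the paper omits.

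The reduction of $P^TXP$ to block-diagonal form is the natural route. The pairs $\{i,n+1-i\}$ are exactly the supports of the rows and columns of $X$, so the block structure is immediate. It also yields more than the determinant. The spectrum of $X$ is the union of the spectra of the $B_i$, together with $x_{k+1,k+1}$ when $n$ is odd. When $X$ is invertible, $X^{-1}$ is again a cross matrix, obtained by inverting each block.

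Your alternative Laplace route also works. Expand along row $1$, then along the last row of each resulting minor. This gives $\det(X)=(x_{11}x_{nn}-x_{1n}x_{n1})\det(X')$, where $X'$ is the inner cross matrix of size $n-2$; the cofactor signs $(-1)^{1+n}(-1)^{n}=-1$ combine correctly. That route gives only the determinant.

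One small slip: for odd $n$ the ordering should read $1,n,2,n-1,\ldots,k,n+1-k,k+1$. Your ``$\ldots,k,k+1$'' is right only for even $n$, where $n+1-k=k+1$. Since your blocks $B_i$ are written with $n+1-i$, this is a typo, not a gap.
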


The following result is known as the Schur determinant formula (e.g., \cite[(3.4.9) and P.3.7]{GarciaHorn}).

\begin{lemma}\label{Schur}
Let $M=\begin{pmatrix} A& B\\ C &D \end{pmatrix}$ be a block matrix.     
\begin{enumerate}
\item Assume that $A$ is invertible. Then $\det(M) = \det(A)\det(D-CA^{-1}B)$.
\item Assume that $D$ is invertible. Then $\det(M) = \det(D)\det(A-BD^{-1}C)$.
\end{enumerate}
\end{lemma}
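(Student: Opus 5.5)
The plan is to prove both parts of Lemma~\ref{Schur} with an explicit block $LU$-type factorization of $M$, and then use multiplicativity of the determinant. Throughout, $A$ is $p\times p$, $D$ is $q\times q$, and $B$ and $C$ have the compatible sizes $p\times q$ and $q\times p$.

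For part 1, assume $A$ is invertible. I will check by direct block multiplication that
\[
M=\begin{pmatrix} A & B\\ C & D\end{pmatrix}
=\begin{pmatrix} I_p & 0\\ CA^{-1} & I_q\end{pmatrix}
\begin{pmatrix} A & 0\\ 0 & D-CA^{-1}B\end{pmatrix}
\begin{pmatrix} I_p & A^{-1}B\\ 0 & I_q\end{pmatrix}.
\]
Multiplying the first two factors gives $\begin{pmatrix} A & 0\\ C & D-CA^{-1}B\end{pmatrix}$. Multiplying this on the right by the third factor gives $\begin{pmatrix} A & B\\ C & CA^{-1}B+D-CA^{-1}B\end{pmatrix}=M$.

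The two outer factors are block unitriangular, so they are genuinely triangular with ones on the diagonal. Hence each has determinant $1$. The middle factor is block diagonal, and its determinant is $\det(A)\det(D-CA^{-1}B)$. Taking determinants of the factorization then yields part 1.

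The one point that needs justification is $\det\begin{pmatrix} X & 0\\ 0 & Y\end{pmatrix}=\det(X)\det(Y)$. I will prove it from the Leibniz expansion. A permutation contributes a nonzero term only if it maps $\{1,\dots,p\}$ to itself, and hence also $\{p+1,\dots,p+q\}$ to itself. Such a permutation splits as a pair $(\sigma_1,\sigma_2)$, and its sign is the product of the signs of $\sigma_1$ and $\sigma_2$. The sum therefore factors as $\det(X)\det(Y)$. Alternatively, one can factor the block diagonal matrix as $\diag$-type blocks $\begin{pmatrix} X&0\\0&I_q\end{pmatrix}\begin{pmatrix} I_p&0\\0&Y\end{pmatrix}$ and expand each factor along its identity rows.

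For part 2, assume $D$ is invertible. I will use the symmetric factorization
\[
M=\begin{pmatrix} I_p & BD^{-1}\\ 0 & I_q\end{pmatrix}
\begin{pmatrix} A-BD^{-1}C & 0\\ 0 & D\end{pmatrix}
\begin{pmatrix} I_p & 0\\ D^{-1}C & I_q\end{pmatrix},
\]
which is verified by the same block multiplication. The identical determinant argument then gives $\det(M)=\det(D)\det(A-BD^{-1}C)$.

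None of these steps is a real obstacle. The only places requiring care are keeping the block dimensions consistent and the block-diagonal determinant fact above.
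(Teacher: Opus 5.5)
Your argument is correct: both block factorizations multiply out to $M$, the outer factors are genuinely unitriangular, and your Leibniz-expansion argument for $\det\begin{pmatrix} X&0\\0&Y\end{pmatrix}=\det(X)\det(Y)$ supplies the one remaining fact. The paper gives no proof of this lemma and simply cites it as the standard Schur determinant formula from Garcia--Horn; your block $LDU$-factorization is the standard way that formula is proved.
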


The following result is folklore (e.g., \cite{stack}) and is a special case of \cite[(1.2)]{Usmani}.

\begin{lemma}\label{tri}
Let $a,b\in\RR$ and $n\in\NN$. Consider the symmetric tridiagonal Toeplitz matrix 
\[
T_n=
\begin{pmatrix}
b & a & 0 & \cdots & 0\\
a & b & a & \ddots & \vdots\\
0 & a & b & \ddots & 0\\
\vdots & \ddots & \ddots & \ddots & a\\
0 & \cdots & 0 & a & b
\end{pmatrix}.
\] 
\begin{enumerate}
\item Assume that $b^2\neq 4a^2$. Let $\lambda_{\pm}=\frac{b\pm\sqrt{b^2-4a^2}}{2}$. Then
\[
\det(T_n)=\frac{\lambda_+^{\,n+1}-\lambda_-^{\,n+1}}{\lambda_+-\lambda_-}.
\]
\item Assume that $b^2= 4a^2$. Then
\[
\det(T_n)=(n+1)\left(\frac{b}{2}\right)^n.
\]
\end{enumerate}
\end{lemma}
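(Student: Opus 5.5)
We prove both parts through the three-term recurrence satisfied by $D_n=\det(T_n)$. We use the conventions $D_0=1$ and $D_1=b$. Expand $\det(T_n)$ along the first row for $n\ge 2$. The $(1,1)$ cofactor is $\det(T_{n-1})$. The $(1,2)$ minor has first column $(a,0,\ldots,0)^T$; expanding it along that column gives $a\det(T_{n-2})$, and this term enters with a minus sign. Hence
\[
D_n=bD_{n-1}-a^2D_{n-2},\qquad n\ge 2.
\]
For $n=2$ this gives $D_2=b^2-a^2$, which agrees with direct computation, so the convention $D_0=1$ is consistent.

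\emph{Part 1.} The characteristic polynomial of the recurrence is $x^2-bx+a^2$. Its roots are exactly $\lambda_\pm$, so $\lambda_++\lambda_-=b$ and $\lambda_+\lambda_-=a^2$. The assumption $b^2\neq 4a^2$ guarantees $\lambda_+\neq\lambda_-$, so the formula is well defined. (If $b^2<4a^2$ the roots are complex conjugates; the algebra is unchanged and the value is real.) Set $E_n=(\lambda_+^{n+1}-\lambda_-^{n+1})/(\lambda_+-\lambda_-)$. Since each $\lambda$ is a root, $\lambda^{n+1}=b\lambda^n-a^2\lambda^{n-1}$, and therefore $E_n$ satisfies the same recurrence. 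The initial values match: $E_0=1$ and $E_1=\lambda_++\lambda_-=b$. By induction, $D_n=E_n$ for all $n\ge 0$.

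\emph{Part 2.} If $b^2=4a^2$, then $x^2-bx+a^2=(x-b/2)^2$ has the double root $b/2$. Set $F_n=(n+1)(b/2)^n$, with $F_0=1$ and $F_1=b$. To check the recurrence, substitute $a^2=b^2/4$:
\[
bF_{n-1}-\tfrac{b^2}{4}F_{n-2}=(b/2)^n\bigl(2n-(n-1)\bigr)=(n+1)(b/2)^n.
\]
Induction again gives $D_n=F_n$.

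The only step needing care is the cofactor expansion that yields the recurrence. In particular, the $(1,2)$ minor must reduce to $a\,T_{n-2}$ with the correct sign, and the small cases $n=1,2$ must fit the conventions.
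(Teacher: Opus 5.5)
Your proof is correct. The first-row expansion gives $D_n=bD_{n-1}-a^2D_{n-2}$ with the right sign and the conventions $D_0=1$, $D_1=b$, and both closed forms are checked against this recurrence and its initial values, including the case $b^2<4a^2$ where $\lambda_\pm$ are complex conjugates. The paper gives no proof of this lemma; it only cites it as folklore and as a special case of Usmani's general tridiagonal determinant formula. Your recurrence argument is the standard proof behind those citations, so it supplies the omitted details rather than taking a different route.
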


The following result is known by the name Sylvester's determinant theorem (e.g., \cite[(B.1.16)]{PP}).

\begin{lemma}
Let $m,n\in\NN$ and let $U$ and $V$ be two $n\times m$ matrices.
Then 
\begin{equation}\label{SYL}
\det(I_n+UV^T) = \det(I_m+V^TU).
\end{equation}
\end{lemma}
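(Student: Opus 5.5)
The natural route is to apply the Schur determinant formula (Lemma~\ref{Schur}) to a single auxiliary block matrix of size $n+m$, computing its determinant in two ways. Specifically, I would consider
\[
M=\begin{pmatrix} I_m & -V^T\\ U & I_n\end{pmatrix},
\]
where $U$ and $V$ are the given $n\times m$ matrices, so $-V^T$ is $m\times n$ and $U$ is $n\times m$. Both diagonal blocks are identity matrices, hence invertible, so both parts of Lemma~\ref{Schur} apply.

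Using part 1 with $A=I_m$, $B=-V^T$, $C=U$, $D=I_n$ gives
\[
\det(M)=\det(I_m)\det\bigl(I_n-U I_m^{-1}(-V^T)\bigr)=\det(I_n+UV^T).
\]
Using part 2 with $D=I_n$ gives
\[
\det(M)=\det(I_n)\det\bigl(I_m-(-V^T)I_n^{-1}U\bigr)=\det(I_m+V^TU).
\]
Equating the two expressions for $\det(M)$ yields \eqref{SYL}.

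There is no real obstacle here. The only points needing care are the signs and the block dimensions: the off-diagonal blocks must be $m\times n$ and $n\times m$ respectively, so that $UV^T$ is $n\times n$ and $V^TU$ is $m\times m$. Putting $-V^T$ in the upper right block is exactly what turns both Schur complements into the required "$I+$" forms.

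If one wanted to avoid Lemma~\ref{Schur}, the same result follows from the block factorizations
\[
\begin{pmatrix} I_m & 0\\ -U & I_n\end{pmatrix} M=\begin{pmatrix} I_m & -V^T\\ 0 & I_n+UV^T\end{pmatrix},
\qquad
M\begin{pmatrix} I_m & V^T\\ 0 & I_n\end{pmatrix}=\begin{pmatrix} I_m & 0\\ U & I_n+UV^T\end{pmatrix}.
\]
However, these compute the same Schur complement twice, giving $\det(M)=\det(I_n+UV^T)$ both times. To obtain the second expression one needs the other elimination, for instance
\[
\begin{pmatrix} I_m & V^T\\ 0 & I_n\end{pmatrix} M=\begin{pmatrix} I_m+V^TU & 0\\ U & I_n\end{pmatrix},
\]
whose determinant is $\det(I_m+V^TU)$. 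Since the unitriangular factors have determinant $1$, this gives the identity.
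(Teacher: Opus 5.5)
Your proof is correct. The paper does not prove this lemma at all; it quotes it as Sylvester's determinant theorem with a pointer to the literature. So there is no in-paper argument to compare against.

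What you give is the standard proof: evaluate $\det\begin{pmatrix} I_m & -V^T\\ U & I_n\end{pmatrix}$ through both Schur complements. It has the merit of making the paper self-contained, because it uses only Lemma~\ref{Schur}, which is already among the preliminaries.

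The block sizes and signs check out:
\begin{itemize}
\item Part~1 gives $\det(I_n-U(-V^T))=\det(I_n+UV^T)$.
\item Part~2 gives $\det(I_m-(-V^T)U)=\det(I_m+V^TU)$.
\end{itemize}

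Your last paragraph can be trimmed. As you noticed yourself, the first two block factorizations produce the same Schur complement, so one of them is redundant. The first factorization together with the third already proves the identity.

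That factorization version is worth keeping if you want independence from Lemma~\ref{Schur}. It uses only two facts:
\begin{itemize}
\item a block-triangular matrix has determinant equal to the product of the determinants of its diagonal blocks;
\item unitriangular factors have determinant $1$.
\end{itemize}
No division is involved, so it holds verbatim over any commutative ring.
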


The following result is known by the name Jacobi's formula (e.g., \cite[(46)]{MCB}).

\begin{lemma}\label{Jac}
Let $A=A(t)$ be a square matrix parameterized by $t$. Then 
\[
\frac{d}{dt}\det(A)=\det(A)\tr\left(A^{-1}\frac{d}{dt}A\right).
\]
\end{lemma}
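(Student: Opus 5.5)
The plan is to derive Jacobi's formula (Lemma~\ref{Jac}) from the Leibniz expansion of the determinant, and then rewrite the result in terms of the adjugate. We assume $A(t)$ has differentiable entries, and that $A$ is invertible at the value of $t$ under consideration; otherwise the right-hand side is not defined. The determinant is a sum over $\sigma\in S_n$ of $\mathrm{sgn}(\sigma)\prod_{j}A(j,\sigma(j))$, so it is a polynomial in the entries. We differentiate each product by the product rule, which gives
\[
\frac{d}{dt}\det(A)=\sum_{i=1}^n \det(A_i),
\]
where $A_i$ is the matrix $A$ with its $i$th row replaced by the $i$th row of $\frac{d}{dt}A$.

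Next we expand $\det(A_i)$ along its $i$th row by the cofactor expansion. The cofactors of the entries in row $i$ of $A_i$ are the same as those of $A$, because they do not involve row $i$. Writing $C(i,j)$ for the $(i,j)$ cofactor of $A$, this gives
\[
\frac{d}{dt}\det(A)=\sum_{i,j} C(i,j)\,\frac{d}{dt}A(i,j).
\]
Since $\operatorname{adj}(A)=C^T$, this double sum equals $\tr\bigl(\operatorname{adj}(A)\frac{d}{dt}A\bigr)$.

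Finally we use $\operatorname{adj}(A)=\det(A)A^{-1}$, valid for invertible $A$, to obtain $\det(A)\tr\bigl(A^{-1}\frac{d}{dt}A\bigr)$. This is the identity in Lemma~\ref{Jac}.

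The only delicate point is the bookkeeping in the first step. Each term of the product-rule derivative differentiates exactly one factor $A(i,\sigma(i))$. Grouping these terms by the row index $i$ reassembles a full Leibniz sum, and that sum is exactly $\det(A_i)$. Everything after this step is standard linear algebra.
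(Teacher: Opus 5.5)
Your proof is correct. It is also more than the paper gives: the paper does not prove Lemma~\ref{Jac}, it simply quotes Jacobi's formula from the literature.

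Your argument is the standard self-contained derivation.
\begin{itemize}
\item The product rule applied to the Leibniz expansion gives $\frac{d}{dt}\det(A)=\sum_i\det(A_i)$.
\item Expanding each $\det(A_i)$ along row $i$, whose cofactors coincide with those of $A$, gives $\sum_{i,j}C(i,j)\frac{d}{dt}A(i,j)=\tr\bigl(\operatorname{adj}(A)\frac{d}{dt}A\bigr)$.
\item The identity $\operatorname{adj}(A)=\det(A)A^{-1}$ then finishes the proof.
\end{itemize}

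Compared with the bare citation, your route buys two things. First, it makes explicit the hypotheses the paper's statement leaves implicit: the entries must be differentiable, and $A$ must be invertible at the point considered. Second, along the way you obtain the stronger identity $\frac{d}{dt}\det(A)=\tr\bigl(\operatorname{adj}(A)\frac{d}{dt}A\bigr)$, which needs no invertibility at all.

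The paper uses the lemma only once: to differentiate $\det(S_n^{(m)})$ at $n=0$. There the entries are polynomials in $n$ and $S_0^{(m)}=I_{m+1}$, so both forms apply and give the same value, since $\operatorname{adj}(I_{m+1})=I_{m+1}$.
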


Let $n\in\NN$. A Cauchy matrix is a square matrix $A$ of size $n$ such that there exist real numbers $a_1,\ldots,a_n,b_1,\ldots,b_n$ with
$a_i+b_j\neq 0$, for every $i,j\in[n]$, and such that 
\[A(i,j)=\frac{1}{a_i+b_j}, \qquad i,j\in[n].\]

The following result may be found, for example in  \cite[0.9.12]{HH}.
\begin{lemma}\label{Cauchy}
Let $n\in\NN$ and let $A$ be a Cauchy matrix of size $n$. Then
\[
\det(A)
=
\frac{
\displaystyle\prod_{1\leq i<j\leq n}(a_j-a_i)(b_j-b_i)
}{
\displaystyle\prod_{1\leq i,j\leq n}(a_i+b_j)
}.
\]
\end{lemma}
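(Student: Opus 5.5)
The plan is to prove the Cauchy determinant formula by induction on $n$, using row and column operations to peel off one row and one column. Throughout, write $C_n$ for the matrix with entries $1/(a_i+b_j)$, $i,j\in[n]$.

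For $n=1$ the formula reads $\det(A)=1/(a_1+b_1)$. Here the numerator is an empty product, so this is immediate.

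For the inductive step, fix $n\ge 2$ and work in two stages.

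First, subtract the last row from each row $i<n$. Using
\[
\frac{1}{a_i+b_j}-\frac{1}{a_n+b_j}=\frac{a_n-a_i}{(a_i+b_j)(a_n+b_j)},
\]
we pull out $a_n-a_i$ from row $i$ for $i<n$. We also pull out $1/(a_n+b_j)$ from column $j$ for every $j$. What remains is a matrix whose last row is all ones and whose $(i,j)$ entry for $i<n$ is $1/(a_i+b_j)$.

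Second, subtract the last column from each column $j<n$. This kills the last row except for the entry $1$ in position $(n,n)$. For $i<n$ the new entries are
\[
\frac{1}{a_i+b_j}-\frac{1}{a_i+b_n}=\frac{b_n-b_j}{(a_i+b_j)(a_i+b_n)}.
\]
We pull out $b_n-b_j$ from column $j$ for $j<n$ and $1/(a_i+b_n)$ from row $i$ for $i<n$. Expanding along the last row, which is now $(0,\dots,0,1)$, leaves exactly $\det(C_{n-1})$.

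Collecting the extracted factors gives the recursion
\[
\det(C_n)=\frac{\prod_{i<n}(a_n-a_i)(b_n-b_i)}{\prod_{j\le n}(a_n+b_j)\prod_{i<n}(a_i+b_n)}\det(C_{n-1}).
\]
It remains to check that this matches the claimed formula. The new numerator factors are exactly the pairs $(i,n)$ in $\prod_{i<j}(a_j-a_i)(b_j-b_i)$. The new denominator factors are exactly the entries of the $n$th row and $n$th column, with $(n,n)$ counted once. Hence the induction closes.

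The only step requiring care is this bookkeeping: the row operation must use the last row and the column operation the last column, so that the signs come out as $(a_n-a_i)$ and $(b_n-b_j)$, matching the convention $a_j-a_i$ with $i<j$. The hypothesis $a_i+b_j\neq 0$ guarantees that every extracted factor is well defined.
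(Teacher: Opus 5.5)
Your proof is correct. Both elimination steps, the extracted factors, the final cofactor expansion along $(0,\dots,0,1)$, and the matching of new factors all check out. The degenerate case where some $a_n-a_i$ or $b_n-b_j$ is $0$ is handled silently by multilinearity. For the matching, the new numerator factors are the pairs $(i,n)$, and the new denominator factors are row $n$ together with column $n$, with $(n,n)$ counted once.

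The paper does not prove this lemma; it quotes it from a standard matrix-analysis reference. It uses the lemma only once, to see that the leading-coefficient matrix $Q(i,j)=(-1)^i\binom{m}{i}\frac{1}{i+m-j+1}$ is nonsingular. Up to nonzero row scalings, $Q$ is a Cauchy matrix with $a_i=i+m+1$ and $b_j=-j$.

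So the comparison is between a citation and a self-contained argument. The citation buys brevity. Your elimination argument, the classical one, makes that step of the paper independent of the reference and works verbatim over any field.

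A common alternative self-contained route is the polynomial argument. Clearing denominators, $\det(A)\prod_{i,j}(a_i+b_j)$ is a polynomial of degree $n^2-n$ that is alternating in the $a$'s and in the $b$'s. Hence it is a constant multiple of the product of the two Vandermonde factors. That route is shorter to state but needs a separate normalization of the constant, which your recursion supplies automatically.
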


\section{Main results}

\subsection{\texorpdfstring{\seqnum{A000051}}{}}

For $n\in\NN_0$ consider the square matrix $M_n$ of size $n+1$ given by
\begin{equation}\label{e1}
M_n(j,k) = -\left\lfloor\frac{j-k-1}{n+1}\right\rfloor,
\qquad j,k\in[n+1].
\end{equation} In a comment to \seqnum{A000051} Luschny conjectured that $\PERM(M_n)=2^n+1$. We show that the conjecture, as stated, is false. However, a minor adjustment to the definition of \(M_n\) yields a valid statement, which was likely Luschny's original intention.

\begin{lemma}
Let $n\in\NN_0$ and let $M_n$ be the matrix defined by \eqref{e1}. Then 
\begin{equation}\label{e2}
M_n(j,k)=\begin{cases}
1, & \textnormal{if } k\geq j,\\
0, & \textnormal{if } k<j,
\end{cases}
\qquad j,k\in[n+1].
\end{equation} In particular, 
$\PERM(M_n) = 1$.
\end{lemma}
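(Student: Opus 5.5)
The plan is to compute each entry $M_n(j,k)$ directly by splitting into the two cases $k\ge j$ and $k<j$. In each case I bound $(j-k-1)/(n+1)$ between consecutive integers, using only that $j,k\in[n+1]$ and hence $|j-k|\le n$.

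\emph{Case $k\geq j$.} Here $j-k-1\le -1$. Also $j-k-1\ge 1-(n+1)-1=-(n+1)$. Hence
\[
-1\le \frac{j-k-1}{n+1}\le -\frac{1}{n+1}<0,
\]
so the floor equals $-1$ and $M_n(j,k)=1$.

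\emph{Case $k<j$.} Here $j-k-1\ge 0$. Also $j-k-1\le (n+1)-1-1=n-1<n+1$. Hence
\[
0\le \frac{j-k-1}{n+1}<1,
\]
so the floor equals $0$ and $M_n(j,k)=0$.

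Alternatively, one could invoke \eqref{pp11} with $k$ replaced by $j-k$, which gives
\[
M_n(j,k)=1-\left\lceil \frac{j-k}{n+1}\right\rceil,
\]
and then read off the value of the ceiling in the two cases. The direct bounds above are simpler, though.

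This establishes \eqref{e2}, so $M_n$ is upper triangular with all diagonal entries equal to $1$. For the permanent, I use \eqref{zp1}. If $\sigma\in S_{n+1}$ is not the identity, there is some $j$ with $\sigma(j)<j$. Indeed, otherwise $\sigma(j)\ge j$ for all $j$, and a downward induction starting from $j=n+1$ forces $\sigma=\mathrm{id}$. For such a $\sigma$ the product $\prod_j M_n(j,\sigma(j))$ contains the factor $M_n(j,\sigma(j))=0$ and vanishes. Only the identity contributes, and its product of diagonal entries is $1$, so $\PERM(M_n)=1$.

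I expect no real obstacle here. The only care needed is the boundary check that $j-k-1=-(n+1)$ is attained when $j=1$ and $k=n+1$; this is harmless because the floor of $-1$ is still $-1$. The result also shows that Luschny's conjecture $\PERM(M_n)=2^n+1$ fails as stated for $n\ge 1$.
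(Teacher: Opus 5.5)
Your proof is correct and follows the paper's argument essentially verbatim. You use the same two-case bounds $-(n+1)\le j-k-1<0$ and $0\le j-k-1\le n-1$ to show that $M_n$ is upper unitriangular, and you also justify something the paper merely asserts: that only the identity permutation contributes to the permanent. As a small aside, since $2^0+1=2\neq 1$, the conjecture as stated already fails at $n=0$, not only for $n\ge 1$.
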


\begin{proof}
Let $j,k\in [n+1]$. We claim that
\[
\left\lfloor\frac{j-k-1}{n+1}\right\rfloor =
\begin{cases}
-1, & \textnormal{if } k\geq j,\\
0, & \textnormal{if } k<j.
\end{cases}
\] Indeed, suppose first that $k\geq j$. Then $-(n+1)\leq j-k-1 < 0$. Thus,
\[
-1 \le \frac{j-k-1}{n+1}<0\Longrightarrow\left\lfloor\frac{j-k-1}{n+1}\right\rfloor=-1.
\] Assume now that $k<j$. Then
\[
0 \le j-k-1 \le n-1,
\] Thus,
\[
0 \leq \frac{j-k-1}{n+1} \leq \frac{n-1}{n+1}<1\Longrightarrow\left\lfloor\frac{j-k-1}{n+1}\right\rfloor=0.
\]
This concludes the proof of \eqref{e2}. Thus, $M_n$ is an upper triangular matrix. In this case the permanent is the product of the diagonal entries. Thus, $\PERM(M_n)=1$.
\end{proof}

\begin{theorem}
Let $n\in\NN_0$ and consider the square matrix $M_n$ of size $n+1$ given by
\begin{equation}\label{e1q}
M_n(j,k) = -\left\lfloor\frac{j-k-2}{n+1}\right\rfloor,
\qquad j,k\in[n+1].
\end{equation}
Then $\PERM(M_n) = 2^n+1$.
\end{theorem}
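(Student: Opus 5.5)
First we identify the entries of $M_n$ explicitly, in the same way as in the preceding lemma. For $j,k\in[n+1]$ we have $-(n+2)\le j-k-2\le n-2$. If $k\ge j-1$, then $-(n+2)\le j-k-2<0$. The floor of $(j-k-2)/(n+1)$ is therefore $-1$, except when $j-k-2=-(n+2)$, i.e., $j=1$, $k=n+1$, where the floor is $-2$. If $k<j-1$, then $0\le j-k-2\le n-2<n+1$, so the floor is $0$. Hence $M_n(j,k)=1$ for $k\ge j-1$ with $(j,k)\neq(1,n+1)$, $M_n(1,n+1)=2$, and $M_n(j,k)=0$ for $k\le j-2$. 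So $M_n$ is an upper Hessenberg $0/1$ matrix (ones on and above the main diagonal and on the subdiagonal), except that its top-right corner entry is $2$. For $n=0$ the matrix is the $1\times 1$ matrix $(2)$, since $-\lfloor -2/1\rfloor=2$, so the claim holds; from now on we assume $n\ge1$.

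Next we write $M_n=H+E$, where $H$ is the Hessenberg all-ones pattern and $E$ has a single $1$ in position $(1,n+1)$. The permanent is linear in the first row, so $\PERM(M_n)=\PERM(H)+\PERM(H')$. Here $H'$ is $H$ with its first row replaced by $e_{n+1}^T$. Expanding $\PERM(H')$ along its first row gives $\PERM$ of the submatrix of $H$ obtained by deleting row $1$ and column $n+1$. That submatrix has rows $2,\ldots,n+1$ and columns $1,\ldots,n$. In it, entry $(j,k)$ is $1$ iff $k\ge j-1$, so after reindexing $j\mapsto j-1$ it is upper triangular with ones on the diagonal. Its permanent is therefore $1$.

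It remains to show $\PERM(H_m)=2^{m-1}$ for the $m\times m$ Hessenberg all-ones matrix $H_m$, with $m=n+1$. We count permutations $\sigma\in S_m$ with $\sigma(j)\ge j-1$ for all $j$, by induction on $m$. Consider the position of $m$ in the image, namely the row $j_0$ with $\sigma(j_0)=m$. Rows $j>j_0$ must use columns $\ge j-1$ other than $m$. Counting from the last row downward forces $\sigma(j)=j-1$ for all $j>j_0$. Removing these rows and columns leaves the structure of $H_{j_0}$ with its last column effectively fixed, giving a count of $\PERM(H_{j_0-1})$, with the convention $\PERM(H_0)=1$. Hence $\PERM(H_m)=\sum_{i=0}^{m-1}\PERM(H_i)$, which yields $2^{m-1}$ by induction. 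Alternatively, one can expand along the last row, which has ones only in columns $m-1$ and $m$; this gives a two-term recursion whose subtle step is identifying the resulting minors. The main obstacle is doing this bookkeeping cleanly, and the sum recursion above is the route I would take. Altogether, $\PERM(M_n)=2^n+1$.
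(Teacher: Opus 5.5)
Your proof is correct and follows essentially the same route as the paper: both identify the $0/1$ pattern with a single entry $2$, split off the unique permutation passing through that entry, and count the permutations with $\sigma(j)\ge j-1$ (the paper's $\pi(j)\le j+1$ after reversing rows) as $2^n$. The only differences are cosmetic. You read off the Hessenberg form directly instead of reversing rows via Lemma~\ref{l1}, and you count via the recursion $\PERM(H_m)=\sum_{i<m}\PERM(H_i)$ rather than the paper's two-choices-per-row argument; your alternative last-row expansion is in fact immediate, since both resulting minors equal $H_{m-1}$.
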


\begin{proof}
For $n=0$, the matrix is $(2)$ and its permanent is $2=2^0+1$, as asserted. Thus, assume that $n\geq1$. First, we claim that we may replace the definition of $M_n(j,k)$ in \eqref{e1q} with 
\begin{equation}\label{e2q}
M_n(j,k) = \left\lfloor\frac{j+k}{n+1}\right\rfloor.
\end{equation} Indeed, the substitution $j\mapsto n+2-j$ reverses the order of the rows of $M_n$ and for the resulting matrix we have
\[
-\left\lfloor \frac{(n+2-j)-k-2}{n+1}\right\rfloor =-\left\lfloor 1-\frac{j+k+1}{n+1}\right\rfloor.\] Using \eqref{pp1} and \eqref{pp11} we obtain
\[
-\left\lfloor 1-\frac{j+k+1}{n+1}\right\rfloor = \left\lceil \frac{j+k+1}{n+1} \right\rceil - 1=\left\lfloor \frac{j+k}{n+1} \right\rfloor.
\]
Since the permanent of a matrix is invariant under permutations of its rows, we may calculate $\PERM(M_n)$ using the alternative definition of $M_n$ given by \eqref{e2q}. To this end, set $N=n+1$. Clearly,
\[
M_n(j,k) =
\begin{cases}
0,& \textnormal{if } 2\leq j+k \leq N-1,\\
1,& \textnormal{if } N \leq j+k \leq 2N-1,\\
2,& \textnormal{if } j+k=2N.
\end{cases}
\] Thus, a term in the permanent expansion for $M_n$ corresponding to $\sigma\in S_N$ contributes a nonzero value if and only if
$\sigma\in S(N)$, where
\[
S(N) = \{\sigma\in S_N\; :\; j+\sigma(j)\geq N\textnormal{ for every } j\in[N]\}.
\] Let $\sigma\in S(N)$. Then 
\[
\prod_{j=1}^{N}M_{n}(j,\sigma(j))=\begin{cases}
2, & \textnormal{if }\sigma(N)=N,\\
1, & \textnormal{otherwise}.
\end{cases}
\]
Let
$C(N) = \{\sigma\in S(N)\;:\; \sigma(N)=N\}$. Then,
$\PERM(M_n) = \#S(N) + \# C(N)$. We prove that $\#S(N) = 2^{N-1}$ and $\# C(N) = 1$.
To this end, define a map $\Phi\colon S(N)\to S_N$ as follows: for $\sigma\in S(N)$ and $1\leq j\leq N$, set $\pi(j) = N+1-\sigma(j)$. Clearly, $\pi\in S_N$ and, for every $j\in[N]$, we have
\[
j+\sigma(j)\geq N
\iff \pi(j) \leq j+1.\] Thus, $\Phi$ is a bijection from $S(N)$ to the set
\[
S'(N) = \{\pi\in S_N \;:\; \pi(j)\le j+1\textnormal{ for every } j\in[N]\}.
\]
Thus, $\#S(N) = \#S'(N)$. Let $C'(N)=\{\pi\in S'(N)\;:\;\pi(N)=1\}$.
Since $\sigma(N)=N$ if and only if $\Phi(\sigma)(N)=1$, we have
$\# C(N) = \#C'(N)$. It remains to establish $\# S'(N)$ and $\# C'(N)$. To this end, let $\pi\in S'(N)$. For $\pi(1)$ there are two possibilities, namely $1$ and $2$. Then the possible values for $\pi(2)$ are in $[3]\setminus\{\pi(1)\}$, i.e., two possibilities. Similarly, if $2\leq j\leq N-1$, the possible values for $\pi(j)$ are in $[j+1]\setminus\{\pi(1),\ldots,\pi(j-1)\}$. Again, two possibilities. Finally, the possible values for $\pi(N)$ are in $[N]\setminus\{\pi(1),\ldots,\pi(N-1)\}$, yielding only one possibility. This proves that $\# S'(N)=2^{N-1}$. Now let $\pi\in C'(N)$. We prove by induction that $\pi(j)=j+1$, for every $j\in[N-1]$. By definition of $S'(N)$, we have $\pi(1)\in[2]$. Since $\pi(N)=1$, necessarily $\pi(1)=2$. Assume that the assertion holds for every $j\in[k]$, for some $k\in[N-2]$. The possible values for $\pi(k+1)$ are in \[[k+2]\setminus\{\pi(1),\ldots,\pi(k),\pi(N)\}=[k+2]\setminus\{2,3,\ldots,k+1,1\}.\] Thus, $\pi(k+1)=k+2$, concluding the inductive proof. It follows that $\#C'(N)=1$ and the proof is complete.
\end{proof}

\subsection{\texorpdfstring{\seqnum{A007405}}{}}

The Dowling numbers are a combinatorial sequence, denoted by $(D_n)_{n\in\NN_0}$, enumerating objects such as $B$-type set partitions \cite[Sec.~4.1]{Adler} and  flattened Stirling permutations \cite[Corollary 3.11]{Buck}. By \cite[Theorem 7]{Ben}, the exponential generating function of the Dowling numbers is given by 
\begin{equation}\label{doweq}
\sum_{n=0}^\infty D_n\frac{x^n}{n!}=\exp\left(x+\frac{e^{2x}-1}{2}\right).\end{equation} The statement of Theorem \ref{Dow1} was conjectured by Irwin in \seqnum{A007405}. We shall need the following result.

\begin{lemma}\label{l17405}
For $n\in\NN$ let $B_n$ be the square matrix of size $n$ defined by
\begin{equation}\label{g1}
B_n(i,j)= \begin{cases} 
0,& \textnormal{if } i<j-1,\\ 
2,& \textnormal{if } i=j-1,\\ 
-2,& \textnormal{if } i=j,\\ 
-\binom{n-j}{i-j},& \textnormal{if } i>j, \end{cases} 
\qquad i,j\in[n].
\end{equation} Set $b_0=1$ and $b_n=\det(B_n)$. Then, for every $n\in \NN_0$,
\begin{equation}\label{ev17405}
b_{n+1}=-b_n-\sum_{k=0}^{n}(-1)^k\binom{n}{k}2^kb_{n-k}.
\end{equation}
\end{lemma}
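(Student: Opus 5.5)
The plan is to expand $\det(B_{n+1})$ along its first column and recognise each resulting minor as a product of a triangular determinant and a smaller $b_m$.

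\textbf{Trailing blocks reproduce the family.} Fix $N=n+1$ and $0\le m\le N$. Consider the trailing principal submatrix of $B_N$ on rows and columns $m+1,\ldots,N$, and reindex by $i'=i-m$, $j'=j-m$. The entries below the diagonal become
\[
-\binom{N-j}{i-j}=-\binom{(N-m)-j'}{i'-j'}.
\]
The diagonal entries ($-2$), the superdiagonal entries ($2$) and the zeros above the superdiagonal are unchanged. Hence this trailing block is exactly $B_{N-m}$, so its determinant is $b_{N-m}$, with the convention that the empty block has determinant $b_0=1$.

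\textbf{Expansion along the first column.} The first column of $B_N$ has entry $-2$ in row $1$ and $-\binom{N-1}{i-1}$ in row $i$ for $i\ge 2$. Delete row $i$ and column $1$.
\begin{itemize}
\item The rows $1,\ldots,i-1$ meet the columns $2,\ldots,i$ in a square block. Its entry in row $r$, column $c$ is $2$ when $c=r+1$ and $0$ when $c>r+1$. So this block is lower triangular with all diagonal entries equal to $2$, and its determinant is $2^{i-1}$.
\item The rows $1,\ldots,i-1$ meet the columns $c>i$ only in zeros, because there $c>i\ge r+1$ forces $c>r+1$.
\item Consequently the minor is block lower triangular. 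Its other diagonal block is the trailing submatrix on rows and columns $i+1,\ldots,N$, which by the first step is $B_{N-i}$.
\end{itemize}
The minor therefore equals $2^{i-1}b_{N-i}$. Laplace expansion along the first column then gives
\[
b_N=-2\,b_{N-1}+\sum_{i=2}^{N}(-1)^{i+1}\Bigl(-\binom{N-1}{i-1}\Bigr)2^{i-1}b_{N-i}.
\]

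\textbf{Matching the statement.} Put $N=n+1$ and $k=i-1$. This turns the identity into
\[
b_{n+1}=-2b_n+\sum_{k=1}^{n}(-1)^{k+1}\binom{n}{k}2^k b_{n-k}.
\]
The $k=0$ term of the sum in \eqref{ev17405} is $-b_n$. Splitting it off shows that the right-hand side of \eqref{ev17405} is exactly this expression. The case $n=0$ also follows directly, since $b_1=-2=-b_0-b_0$.

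The main point needing care is the first step. The entries of $B_n$ depend on $n$, so the leading principal submatrices of $B_N$ are not of the same form, and an expansion along the last row or column would not close up. One must work with trailing blocks and expand along the first column, and also verify the block-triangular shape of each minor.
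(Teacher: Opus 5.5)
Your proposal is correct and follows essentially the same route as the paper: you expand $\det(B_{n+1})$ along the first column, show each minor is block lower triangular with a lower-triangular block of $2$'s (determinant $2^k$) and a trailing block equal to $B_{n-k}$, and then split off the $k=0$ term. Your reindexing check that trailing principal blocks reproduce $B_{N-m}$ is the same as the paper's verification that $Q_{n-k}=B_{n-k}$.
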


\begin{proof}
Let $n\in\NN_0$. We expand $\det(B_{n+1})$ along the first column. For $k\in[n+1]$
let $M_k$ denote the minor obtained from $B_{n+1}$ by deleting the first column and the $k$th row. It is straightforward to verify that $M_1=B_n$. Furthermore, $B_{n+1}(1,1)=-2$ and, for $2\leq k\leq n+1$, we have $B_{n+1}(k,1)=-\binom{n}{k-1}$. Thus, 
\begin{equation}\label{em17405}
b_{n+1}=-2b_n - \sum_{k=1}^{n}(-1)^k\binom{n}{k}\det(M_{k+1}).    
\end{equation}
Let $k\in[n]$ and consider $M_{k+1}$. We claim that $M_{k+1}$ has the following representation as a block matrix: 
\[
M_{k+1}=
\begin{pmatrix}
P_k & 0\\
\ast & Q_{n-k}
\end{pmatrix},
\] where $P_k$ and $Q_{n-k}$ are square matrices of sizes $k$ and $n-k$, respectively. Indeed, let $i\in[k]$ and $k+1\leq j\leq n$. Then $i\leq k < k+1\leq j + 1 - 1$. Thus, $M_{k+1}(i,j) = B_{n+1}(i,j+1)=0$. It follows that 
\begin{equation}\label{eba17405}
\det(M_{k+1})=\det(P_k)\det(Q_{n-k}).
\end{equation} We now claim that $\det(P_k)=2^k$. To see this, let $i,j\in [k]$. We have $P_k(i,j)=B_{n+1}(i,j+1)$ and the latter is equal to $0$ if $i<j+1-1=j$ and to $2$ if $i=j+1-1=j$. It follows that $P_k$ is a lower-triangular matrix with all diagonal entries equal to $2$. Thus, $\det(P_k)=2^k$. We now claim that $Q_{n-k}=B_{n-k}$ and therefore $\det(Q_{n-k})=\det(B_{n-k})$. Indeed, let $i,j\in [n-k]$. We have $Q_{n-k}(i,j)=B_{n+1}(i+k+1,j+k+1)$. Now, each of the conditions in \eqref{g1} holds for $i$ and $j$ if and only if it holds for $i+k+1$ and $j+k+1$. Regarding the fourth condition, notice that $-\binom{n+1-(j+k+1)}{i+k+1-(j+k+1)}=-\binom{n-k-j}{i-j}$. Thus, for the entries of $Q_{n-k}$ exactly the same conditions apply as for those of $B_{n-k}$ and hence $Q_{n-k}=B_{n-k}$. From \eqref{eba17405} with the two results just obtained, we conclude that $\det(M_{k+1})=2^kb_{n-k}$. Substituting this into \eqref{em17405} yields \eqref{ev17405}.
\end{proof}

\begin{theorem}\label{Dow1}
Let $n\in\NN$ and consider the square matrix $A_n$ of size $n$ given by
\[
A_n(i,j) =
\begin{cases}
1, & \textnormal{if } i<j-1,\\
-1, & \textnormal{if } i=j-1,\\
\binom{n-j}{i-j}, & \textnormal{if } i\geq j,
\end{cases}
\qquad i,j\in[n].
\] 
Then, for every $n\in\NN_0$,
\[
\det(A_{n+1})=D_n.
\]\end{theorem}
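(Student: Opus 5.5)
The plan is to show that $\det(A_{n+1})=(-1)^n\det(B_n)=(-1)^n b_n$, and then to check that $(-1)^n b_n$ satisfies the same recurrence and initial value as the Dowling numbers.

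\emph{Recurrence for $D_n$.} Write $F(x)=\exp\bigl(x+\tfrac{e^{2x}-1}{2}\bigr)$ for the exponential generating function \eqref{doweq}. Then $F'(x)=(1+e^{2x})F(x)$. Comparing coefficients of $x^n/n!$ gives
\[
D_{n+1}=D_n+\sum_{k=0}^{n}\binom{n}{k}2^kD_{n-k},\qquad D_0=1 .
\]
Now put $d_n=(-1)^nb_n$ in \eqref{ev17405} and multiply through by $(-1)^{n+1}$. The factor $(-1)^k(-1)^{n-k}=(-1)^n$ shows that $d_n$ obeys exactly the same recurrence, and $d_0=b_0=1$. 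By induction on $n$, $D_n=(-1)^nb_n$ for all $n\in\NN_0$.

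\emph{Reduction of $A_{n+1}$.} Perform the column operations $\mathrm{col}_j\leftarrow \mathrm{col}_j-\mathrm{col}_{j+1}$ for $j=1,\dots,n$, always using the original columns. This is legitimate if done in increasing order of $j$, or viewed as right multiplication by a unimodular upper bidiagonal matrix, so the determinant is unchanged. Take $j\le n$, where $A_{n+1}(i,j)=\binom{n+1-j}{i-j}$ for $i\ge j$. The new entries are:
\begin{itemize}
\item for $i>j$, by Pascal's rule, $\binom{n+1-j}{i-j}-\binom{n-j}{i-j-1}=\binom{n-j}{i-j}$;
\item for $i=j$, $1-(-1)=2$;
\item for $i=j-1$, $-1-1=-2$;
\item for $i<j-1$, $1-1=0$.
\end{itemize}
The last column is unchanged.

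The key observation is the last row. For $j\le n$ its new entries are $\binom{n-j}{n+1-j}=0$, while its last entry is $\binom{0}{0}=1$. Expanding along the last row therefore leaves the upper-left $n\times n$ block $C_n$, with
\[
C_n(i,j)=\begin{cases}0,& i<j-1,\\ -2,& i=j-1,\\ 2,& i=j,\\ \binom{n-j}{i-j},& i>j.\end{cases}
\]
This is exactly $C_n=-B_n$, so $\det(A_{n+1})=\det(C_n)=(-1)^n b_n=D_n$. The case $n=0$ is trivial, since $A_1=(1)$ and $D_0=1$.

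\emph{Main obstacle.} The only delicate point is bookkeeping: making sure the column operations use the correct indices so that Pascal's rule applies, in particular that the entry of column $j+1$ in row $i$ is $\binom{n-j}{i-j-1}$ with the right convention when $i=j$. After that, the vanishing of the last row is immediate. The identification $D_n=(-1)^nb_n$ is a routine generating-function comparison.
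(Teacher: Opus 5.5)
Your proof is correct and follows the paper's route: adjacent column differences plus Pascal's rule turn the last row into a unit vector, the surviving minor is $\pm B$, and Lemma~\ref{l17405} together with the Dowling generating function finishes the argument. The differences are cosmetic. You subtract column $j+1$ rather than column $j-1$, so you keep the upper-left block $-B_n$ instead of deleting the first column to get $B_{n-1}$ with sign $(-1)^{n+1}$. You also match recurrences by induction instead of solving $F'=(1+e^{2x})F$. One harmless slip: the matrix implementing your column operations is lower bidiagonal, not upper.
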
 

\begin{proof} 
Set $d_n = \det(A_{n+1})$. Since $A_1=(1)$, we have $\det(A_1)=1=D_0$, and the assertion holds in this case. Thus, we assume that $n\geq 1$ and therefore analyze $A_n$ for $n\geq 2$. For each $j=n,n-1,\ldots,2$, subtract from the $j$th column of $A_n$ its $j-1$th column. Let $\widetilde A_n$ denote the resulting matrix. It is straightforward to verify that 
\[ 
\widetilde A_n(i,j) = \begin{cases} 
\binom{n-1}{i-1},& \textnormal{if } j=1,\\ 
0,& \textnormal{if } i<j-2,\\ 
2,& \textnormal{if } i=j-2,\\ 
-2,& \textnormal{if } i=j-1,\\ 
-\binom{n-j}{i-j+1},& \textnormal{if } i\geq j\geq 2, 
\end{cases}\qquad i,j\in[n].
\] In particular, the last row of $\widetilde A_n$ is $(1,0,\dots,0)$. Therefore, expanding $\det(\widetilde A_n)$ along the last row gives \[\det(A_n)=\det(\widetilde A_n)=(-1)^{n+1}\det(B_{n-1}),\] where $B_{n-1}$ is the square matrix of size $n-1$ obtained by deleting the last row and the first column of $\widetilde A_n$.  Set $b_0=1$ and, for $n\geq 1$, set  $b_n=\det(B_n)$. Thus, $d_n=(-1)^nb_n$. By Lemma \ref{l17405},
\begin{equation}\label{ef17405}
b_{n+1}=-b_n-\sum_{k=0}^n (-1)^k\binom{n}{k}2^k b_{n-k}.
\end{equation} Multiplying both sides of \eqref{ef17405} by $(-1)^{n+1}$ and using $d_n=(-1)^n b_n$, we obtain, \begin{equation}\label{es1} d_{n+1}=d_n+\sum_{k=0}^n \binom{n}{k}2^{n-k}d_k. \end{equation} Let $F(x)=\sum_{n=0}^\infty d_n \frac{x^n}{n!}$ be the exponential generating function of the sequence $(d_n)_{n\in\NN_0}$. 
Multiplying \eqref{es1} by $\frac{x^n}{n!}$ and summing over $n\in\NN_0$ yields \[ F'(x)=F(x)+F(x)\sum_{n=0}^\infty \frac{(2x)^n}{n!}=(1+e^{2x})F(x).
\] Since $F(0)=d_0=1$,
\[F(x)=\exp\left(\int_0^x (1+e^{2t})dt\right) =\exp\left(x+\frac{e^{2x}-1}{2}\right),\] exactly the exponential generating function of the Dowling numbers, stated in \eqref{doweq}. Thus, $D_n=d_n =\det(A_{n+1})$, for every $n\in\NN_0$, as asserted.
\end{proof} 

\subsection{\texorpdfstring{\seqnum{A071999}}{}}

In the following result we obtain a closed form formula for the determinant of a matrix which is almost a cross matrix.

\begin{theorem}
For $n\in\NN$ let $A_n$ be the square matrix of size $n$ defined by
\[
A_n(i,j)=
\begin{cases}
1,& \textnormal{if $i=j$},\\
i,& \textnormal{if $i+j=n$},\\
0,& \textnormal{otherwise},
\end{cases} \qquad i,j\in[n].
\]
Then
\[
\det(A_n)=\prod_{i=1}^{\left\lfloor\frac{n-1}{2}\right\rfloor}(1-i(n-i)).
\]
\end{theorem}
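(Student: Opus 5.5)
The plan is to reduce $A_n$ to a cross matrix and apply Lemma \ref{l171999}. The nonzero off-diagonal entries of $A_n$ lie on the anti-diagonal $i+j=n$. This is the secondary diagonal of the $(n-1)\times(n-1)$ leading principal submatrix, not of $A_n$ itself. Row $n$ contains no pair with $i+j=n$ for $j\ge1$, and column $n$ contains none for $i\ge 1$. Hence the last row and the last column of $A_n$ are both $(0,\dots,0,1)$. Expanding $\det(A_n)$ along the last row gives $\det(A_n)=\det(X)$, where $X$ is the leading principal submatrix of size $m=n-1$. For $n=1$ the product is empty and $\det(A_1)=1$, so we may assume $n\ge2$.

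Next I verify that $X$ is a cross matrix of size $m$. For $i,j\in[m]$, the condition $i+j=n$ is exactly $j=m+1-i$, so the off-diagonal support of $X$ is the secondary diagonal. The diagonal entries are $x_{ii}=1$ and the anti-diagonal entries are $x_{i,m+1-i}=i$. One point needs care: when $m$ is odd, the center $i=j=(m+1)/2$ satisfies both conditions $i=j$ and $i+j=n$. The definition of $A_n$ lists the case $i=j$ first, so I read the center entry as $1$. This is the convention under which the formula holds. The alternative reading would give a center entry $n/2$ and would introduce an extra factor.

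With that reading, the center entry is $x_{k+1,k+1}=1$ when $m=2k+1$, so \eqref{e171999} applies in both parities. It gives
\[
\det(X)=\prod_{i=1}^{\lfloor m/2\rfloor}\bigl(x_{ii}x_{m+1-i,m+1-i}-x_{i,m+1-i}x_{m+1-i,i}\bigr)=\prod_{i=1}^{\lfloor (n-1)/2\rfloor}\bigl(1-i(n-i)\bigr),
\]
because $x_{i,m+1-i}=i$ and $x_{m+1-i,i}=m+1-i=n-i$.

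The only real obstacle is the center-entry convention in the odd case. Apart from that, the proof consists of the cofactor expansion and a direct substitution into Lemma \ref{l171999}.
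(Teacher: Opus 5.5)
Your proposal is correct and follows the paper's proof essentially verbatim: you expand along the last row to reduce to the $(n-1)\times(n-1)$ cross matrix, then substitute $x_{ii}=1$, $x_{i,n-i}=i$, $x_{n-i,i}=n-i$ into \eqref{e171999}. Your explicit remark that for even $n$ the center entry must be read as $1$ (the case $i=j$ taking precedence) is a point the paper uses only implicitly when it invokes \eqref{e171999}, and it is indeed the reading under which the stated formula holds.
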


\begin{proof}
First notice that for $i=n$ the condition $i+j=n$ forces $j=0$, which is impossible since $1\le j\le n$. Thus, the $n$th row of $A_n$ has only one nonzero entry, namely, $A_n(n,n)=1$. Expanding the determinant of $A_n$ along the $n$th row yields $\det(A_n)=\det(B_{n-1})$,
where $B_{n-1}$ is the cross matrix of size $n-1$ given by
\[
B_{n-1}(i,j)=
\begin{cases}
1,& \textnormal{if $i=j$},\\
i,& \textnormal{if $i+j=n$},\\
0,& \textnormal{otherwise},
\end{cases} \qquad i,j\in[n-1].
\] In the notation of Lemma \ref{l171999}, for every $1\leq i\leq \lfloor \frac{n-1}{2}\rfloor$,
\[
x_{ii}=1,\qquad x_{(n-1)+1-i,(n-1)+1-i}=1,\qquad
x_{i,(n-1)+1-i}=i,\qquad x_{(n-1)+1-i,i}=n-i.
\]
By \eqref{e171999},
\begin{align*}
\det(B_{n-1})
&=\prod_{i=1}^{\left\lfloor\frac{n-1}{2}\right\rfloor}(x_{ii}x_{(n-1)+1-i,(n-1)+1-i}-x_{i,(n-1)+1-i}x_{(n-1)+1-i,i}
)\\
&=\prod_{i=1}^{\left\lfloor\frac{n-1}{2}\right\rfloor}(1-i(n-i)).\qedhere
\end{align*}
\end{proof}

\subsection{\texorpdfstring{\seqnum{A083392}}{}}

The statement of the following theorem  confirms a conjecture by Spezia stated in \seqnum{A083392}.

\begin{theorem}
For $n\in\NN$ let $T_n$ be the symmetric Toeplitz matrix whose first row is
\[
0,1,1,2,2,3,3,\dots.
\] 
Then
$\det(T_n)=(-1)^{n-1}\left\lfloor \frac{n^2}{4}\right\rfloor$.
\end{theorem}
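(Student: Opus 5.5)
We would write the entries as $T_n(i,j)=c_{|i-j|}$ with $c_k=\lceil k/2\rceil$, and use the decomposition
\[
c_k=\frac{k+\epsilon_k}{2},\qquad \epsilon_k=\begin{cases}1,& k \textnormal{ odd},\\ 0,& k\textnormal{ even}.\end{cases}
\]
Hence $2T_n=D_n+P_n$, where $D_n(i,j)=|i-j|$ is the path distance matrix and $P_n(i,j)=\epsilon_{|i-j|}$. Let $u\in\RR^n$ be the indicator vector of the odd indices and $v$ that of the even indices. Then $i-j$ is odd exactly when one index is odd and the other even, so $P_n=uv^T+vu^T$. This is a rank-two perturbation, so the plan is to compute $\det(D_n)$ and $D_n^{-1}$ explicitly and then finish with Sylvester's determinant theorem \eqref{SYL}. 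The cases $n=1,2$ (where $D_n$ is singular or degenerate for the formulas below) will be checked directly: $\det(T_1)=0$ and $\det(T_2)=-1$.

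For $n\ge 2$ we first establish the classical facts $\det(D_n)=(-1)^{n-1}(n-1)2^{n-2}$ and
\[
D_n^{-1}=-\tfrac12 L_n+\tfrac{1}{2(n-1)}ww^T,\qquad w=e_1+e_n,
\]
where $L_n$ is the Laplacian of the path, that is, the tridiagonal matrix with diagonal $(1,2,\dots,2,1)$ and off-diagonal entries $-1$. The inverse is verified by multiplying it against $D_n$. The key observations for that check are that the discrete second difference of $|i-j|$ in $j$ vanishes except at $j=i$, where it equals $2$, and that the boundary rows are corrected by the rank-one term. The determinant can be obtained the same way: replace each row $R_i$ of $D_n$ by $R_i-R_{i+1}$ for $i=1,\dots,n-1$, then each such row by its difference with the next, and the result is almost triangular. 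Alternatively, we can cite the known formula.

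Next we write $2T_n=D_n+UV^T$ with $U=(u\ v)$ and $V=(v\ u)$, both $n\times 2$. Then $\det(2T_n)=\det(D_n)\det(I_2+V^TD_n^{-1}U)$ by \eqref{SYL}, applied to $I_n+(D_n^{-1}U)V^T$. So we only need the four scalars $u^TD_n^{-1}u$, $u^TD_n^{-1}v$, $v^TD_n^{-1}u$ and $v^TD_n^{-1}v$. These are easy from the formula for $D_n^{-1}$. Since $L_n(u+v)=L_n\mathbf 1=0$, we have $L_nv=-L_nu$, and $L_nu$ is an explicit vector: its interior entries are $\pm2$ alternating, with boundary entries $\pm1$. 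Also, $w^Tu$ and $w^Tv$ are each $0$, $1$ or $2$, depending on the parity of $n$.

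The main obstacle is the case bookkeeping in this last step. We would split into $n=2m$ and $n=2m+1$. In each case we compute $u^TL_nu$ (it counts the odd/even adjacencies, giving roughly $n-1$) together with the $w$-terms, form the $2\times2$ determinant, and multiply by $\det(D_n)/2^n$. The target identities are that this product equals $(-1)^{n-1}m^2$ for $n=2m$ and $(-1)^{n-1}m(m+1)$ for $n=2m+1$, that is, $(-1)^{n-1}\lfloor n^2/4\rfloor$. If the algebra becomes messy, the fallback is row and column operations with lag two: replace $R_i$ by $R_i-R_{i-2}$ for $i=n,\dots,3$, then repeat with columns. This turns $c_{|i-j|}$ into a banded matrix with a two-row border, from which a short recurrence in $n$ can be derived and checked against $(-1)^{n-1}\lfloor n^2/4\rfloor$ by induction.
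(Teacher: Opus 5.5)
Your proposal is correct, and its main route differs genuinely from the paper's.

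The paper works on $T_n$ directly. It subtracts row $i-2$ from row $i$, then column $j-2$ from column $j$. This produces a bordered matrix whose core $C$ is the tridiagonal Toeplitz matrix with $-2$ on the diagonal and $-1$ beside it. It then takes the Schur complement with respect to $C$, using an explicit $C^{-1}$ and a $2\times 2$ matrix $FC^{-1}F^T$ with floor-function entries that is only asserted ``by direct calculations''. Your fallback is essentially this argument.

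Your main route instead writes $2T_n=D_n+uv^T+vu^T$ and imports the Graham--Pollak determinant and the Graham--Lov\'asz inverse of the path distance matrix. The inverse really is a two-line check, since $D_nL_n=-2I_n+\mathbf 1w^T$ and $D_nw=(n-1)\mathbf 1$. After Sylvester's theorem, everything is governed by $u^TL_nu=v^TL_nv=-u^TL_nv=n-1$ and by $w^Tu,w^Tv$. The value is exactly $n-1$, not roughly, because adjacent indices always have opposite parity.

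The bookkeeping you flag as the main obstacle is in fact short. Set $a=u^TD_n^{-1}u$, $b=u^TD_n^{-1}v$ and $c=v^TD_n^{-1}v$. Then $\det(I_2+V^TD_n^{-1}U)=(1+b)^2-ac$, which equals $n^2/(n-1)$ for even $n$ and $n+1$ for odd $n$. This gives $\det(T_n)=(-1)^{n-1}n^2/4$ and $(-1)^{n-1}(n^2-1)/4$ respectively, as required.

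What your approach buys is transparency. The parity dependence enters only through the boundary vector $w$, no alternating floor-function sums need to be evaluated, and the method adapts to other matrices that differ from $(|i-j|)$ by a low-rank term. The paper's approach, in exchange, stays within the tools listed in its preliminaries (row operations, the tridiagonal lemma, the Schur formula) and needs no results on distance matrices of trees.

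Incidentally, the formulas you use also hold for $n=2$, so only $n=1$ needs a separate check.
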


\begin{proof}
By definition of $T_n$, we have
\[T_n(i,j)=\left\lceil \frac{|i-j|}{2}\right\rceil, \qquad i,j\in[n].\]
For $n=1,2$ the assertion is verified by direct calculations. Thus, assume that $n\geq 3$. For each $i=n,n-1,\dots,3$, subtract the $i-2$th row from the $i$th row of $T_n$. Then, for each $j=n,n-1,\dots,3$, subtract the $j-2$th column from the $j$th column. Let $B_n$ denote the resulting matrix. We claim that 
$B_n$ is the block matrix
\[
B_n=
\begin{pmatrix}
A&F\\
F^T&C
\end{pmatrix},
\]
where $A$ is of size $2\times 2$, $F$ is of size $2\times(n-2)$, and $C$ is of size $(n-2)\times(n-2)$, and are given by 
\[
A=\begin{pmatrix}0&1\\1&0\end{pmatrix},
F=
\begin{pmatrix}
1&1&1&\cdots&1\\
0&1&1&\cdots&1
\end{pmatrix},
C=
\begin{pmatrix}
-2&-1&0&\cdots&0\\
-1&-2&-1&\ddots&\vdots\\
0&-1&-2&\ddots&0\\
\vdots&\ddots&\ddots&\ddots&-1\\
0&\cdots&0&-1&-2
\end{pmatrix}.
\]
Indeed, let $3\leq i,j\leq n$. Then
\begin{align*}
B_n(i,j)
&=
T_n(i,j)-T_n(i,j-2)-T_n(i-2,j)+T_n(i-2,j-2)\\
&=\left\lceil \frac{|i-j|}{2}\right\rceil-\left\lceil \frac{|i-j+2|}{2}\right\rceil
-
\left\lceil \frac{|i-2-j|}{2}\right\rceil
+
\left\lceil \frac{|i-j|}{2}\right\rceil\\
&=
\begin{cases}
-2,& \textnormal{if $|i-j|=0$},\\
-1,& \textnormal{if $|i-j|=1$},\\
0,& \textnormal{if $|i-j|\geq 2$}.
\end{cases}
\end{align*}
Similarly, for every $3\leq j \leq n$ we have
\begin{align*}
B_n(1,j)&=T_n(1,j)-T_n(1,j-2)=\left\lceil \frac{j-1}{2}\right\rceil-
\left\lceil \frac{j-3}{2}\right\rceil=1,\\
B_n(2,j)&=T_n(2,j)-T_n(2,j-2)=\left\lceil \frac{j-2}{2}\right\rceil-\left\lceil \frac{j-4}{2}\right\rceil
=\begin{cases}
0,&\textnormal{if $j=3$},\\
1,&\textnormal{if $j\geq 4$}.
\end{cases}
\end{align*}
By symmetry, the same holds for the first two columns. Finally, the \(2\times 2\) upper-left block remains unchanged during the operations and is originally given by $A$. By the second part of Lemma \ref{Schur},
\[
\det(B_n)=\det(C)\det(A-FC^{-1}F^T).
\] By Lemma \ref{tri}, $\det(C)=
(-1)^{n-2}(n-1)$. Furthermore, it is straightforward to verify that the inverse of $C$ is given by
\[
C^{-1}(i,j)=
\begin{cases}
(-1)^{i+j+1}\dfrac{i(n-1-j)}{n-1}, & \textnormal{if $i\le j$},\\
(-1)^{i+j+1}\dfrac{j(n-1-i)}{n-1}, & \textnormal{if $i> j$},
\end{cases}\qquad i,j\in[n-2].
\]
By direct calculations we obtain
\[
FC^{-1}F^T=-\frac{1}{n-1}
\begin{pmatrix}
\left\lfloor \frac{(n-1)^2}{4} \right\rfloor & \left\lfloor \frac{(n-2)^2}{4} \right\rfloor\\[8pt]
\left\lfloor \frac{(n-2)^2}{4} \right\rfloor & \left\lfloor \frac{(n-2)n}{4} \right\rfloor
\end{pmatrix}.
\] Hence,
\[A-FC^{-1}F^{T}=\begin{pmatrix}\frac{1}{n-1}\left\lfloor \frac{(n-1)^{2}}{4}\right\rfloor  & 1+\frac{1}{n-1}\left\lfloor \frac{(n-2)^{2}}{4}\right\rfloor \\[8pt]
1+\frac{1}{n-1}\left\lfloor \frac{(n-2)^{2}}{4}\right\rfloor  & \frac{1}{n-1}\left\lfloor \frac{(n-2)n}{4}\right\rfloor 
\end{pmatrix}.\] Thus,
\begin{align*}
&\det(A-FC^{-1}F^T)=\\
&\frac{1}{(n-1)^{2}}\left\lfloor \frac{(n-1)^{2}}{4}\right\rfloor \left\lfloor \frac{(n-2)n}{4}\right\rfloor -\left(1+\frac{1}{n-1}\left\lfloor \frac{(n-2)^{2}}{4}\right\rfloor \right)\left(1+\frac{1}{n-1}\left\lfloor \frac{(n-2)^{2}}{4}\right\rfloor \right)\\
&=-\frac{1}{n-1}\left\lfloor \frac{n^2}{4}\right\rfloor.
\end{align*} It follows that 
\begin{align*}
\det(T_n)=\det(B_n)&=\det(C)\det(A-FC^{-1}F^T)=\\
&=(-1)^{n-2}(n-1)\left(-\frac{1}{n-1}\left\lfloor \frac{n^2}{4}\right\rfloor\right)\\
&=(-1)^{n-1}\left\lfloor \frac{n^2}{4}\right\rfloor.\qedhere
\end{align*}
\end{proof}

\subsection{\texorpdfstring{\seqnum{A085799}}{}}

In the following result we obtain a closed form formula for the determinant of a certain symmetric matrix. Subsequently, we confirm a conjecture by Lajos stated in \seqnum{A085799}.

\begin{theorem}\label{t1}
Let $n\in\NN$ and let $A_n$ be the square matrix of size $n$ defined by
$A_n(i,j)=|i^2-j^2|, i,j\in[n]$. Then, for $n\geq 2$,
\[
\det(A_n)=(-1)^{n-1}\frac{n+1}{2}\cdot \frac{(2n-1)!}{(n-2)!}.
\]
\end{theorem}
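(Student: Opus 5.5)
The plan is to prove a general statement for matrices of the form $|x_i-x_j|$ and then specialise it to $x_i=i^2$. Let $x_1<x_2<\cdots<x_n$ be real numbers and let $X$ be the matrix with $X(i,j)=|x_i-x_j|$. The claim to prove is
\[
\det(X)=(-1)^{n-1}2^{\,n-2}(x_n-x_1)\prod_{i=1}^{n-1}(x_{i+1}-x_i), \qquad n\geq 2 .
\]
Theorem \ref{t1} then follows by taking $x_i=i^2$, so that $x_{i+1}-x_i=2i+1$ and $x_n-x_1=n^2-1$. We have
\[
\prod_{i=1}^{n-1}(2i+1)=\frac{(2n-1)!}{2^{n-1}(n-1)!},
\]
hence
\[
2^{n-2}(n^2-1)\frac{(2n-1)!}{2^{n-1}(n-1)!}=\frac{n+1}{2}\cdot\frac{(2n-1)!}{(n-2)!},
\]
which is the asserted value.

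For the general identity we use only row operations. Write $d_i=x_{i+1}-x_i>0$.

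\textbf{Step 1.} For $i=n,n-1,\dots,2$ (in this order), replace row $i$ by row $i$ minus row $i-1$. Because the $x$'s are increasing, the new row $i$ has entries
\[
|x_i-x_j|-|x_{i-1}-x_j|=\begin{cases} d_{i-1}, & j\le i-1,\\ -d_{i-1}, & j\ge i.\end{cases}
\]
Factoring $d_{i-1}$ out of row $i$ leaves a row $r_{i-1}$ with $+1$ in columns $1,\dots,i-1$ and $-1$ in the remaining columns. The first row is still $(0,x_2-x_1,\dots,x_n-x_1)$.

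\textbf{Step 2.} For $i=n-1,\dots,2$, replace $r_i$ by $r_i-r_{i-1}$. Since $r_i$ and $r_{i-1}$ differ only in column $i$, where the entries are $1$ and $-1$, this gives the row $2e_i$, i.e. $2$ in column $i$ and $0$ elsewhere. So the matrix now consists of the first row, the row $r_1=(1,-1,\dots,-1)$, and the rows $2e_2,\dots,2e_{n-1}$.

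\textbf{Step 3.} Expand along the rows $2e_2,\dots,2e_{n-1}$. This contributes $2^{n-2}$ times a sign and leaves the $2\times 2$ minor in columns $1$ and $n$:
\[
\begin{pmatrix}0 & x_n-x_1\\ 1 & -1\end{pmatrix},
\]
whose determinant is $-(x_n-x_1)$. Combining the factors gives the absolute value $2^{n-2}(x_n-x_1)\prod d_i$.

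The only delicate point is the overall sign, which comes from the positions of the rows $2e_i$ when expanding. I would settle it most cleanly by noting that after Step 2 the rows are, in order, (first row, $r_1$, $2e_2,\dots,2e_{n-1}$). Moving the first row to the bottom (past the $n-1$ other rows, sign $(-1)^{n-1}$) makes the matrix block upper triangular, with diagonal blocks $1$ (from $r_1$), $2I_{n-2}$ and $x_n-x_1$. Its determinant is therefore $2^{n-2}(x_n-x_1)$, which gives the sign $(-1)^{n-1}$. As a check, the case $n=3$ with $x=(0,1,2)$ gives $4$ on both sides, and $n=2$ gives $-d_1^2$ on both sides.
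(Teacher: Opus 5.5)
Your proof is correct, and it takes a somewhat different route from the paper.

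\textbf{The paper's route.} The paper applies the congruence $B_n=U_nA_nU_n^T$, i.e.\ first differences of rows \emph{and} columns. This turns $A_n$ into a symmetric arrowhead matrix with corner entry $0$, border $z=(3,5,\ldots,2n-1)^T$ and diagonal block $D=-2\diag(z)$. It then evaluates this with the Schur complement, $\det(B_n)=-\det(D)\,z^TD^{-1}z$.

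\textbf{Your route.} You difference only the rows and normalise each differenced row to a $\pm1$ pattern by pulling out $x_i-x_{i-1}$. You then difference these normalised rows a second time to obtain the rows $2e_i$. The resulting matrix is nearly triangular, so no inverse or Schur complement is needed. Your argument also gives, at no extra cost, the general identity $\det\bigl(|x_i-x_j|\bigr)=(-1)^{n-1}2^{n-2}(x_n-x_1)\prod_{i=1}^{n-1}(x_{i+1}-x_i)$ for any increasing sequence, with the theorem as the case $x_i=i^2$.

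\textbf{Comparison.} The paper's arrowhead computation would generalise verbatim, with border entries $x_j-x_{j-1}$ and diagonal entries $-2(x_i-x_{i-1})$, but the paper does not state this. Its route keeps the matrix symmetric and reduces the final evaluation to a one-line sum. Yours is slightly more elementary and makes the general statement explicit.

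\textbf{One small inaccuracy in your sign argument.} After moving the first row to the bottom, the matrix is not block upper triangular with respect to the blocks $\{1\}$, $\{2,\ldots,n-1\}$, $\{n\}$. The last row still has the nonzero entries $x_k-x_1$ in columns $2,\ldots,n-1$. The value $2^{n-2}(x_n-x_1)$ is nevertheless correct. The first column is $e_1$, and the complementary $(n-1)\times(n-1)$ submatrix is lower triangular with diagonal $2,\ldots,2,x_n-x_1$. Equivalently, the Laplace expansion you started in Step 3 gives $(-1)^{n-2}\cdot 2^{n-2}\cdot\bigl(-(x_n-x_1)\bigr)$ directly.
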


\begin{proof}
Let $U_n$ be the square matrix of size $n$ given by
\[
U_n(i,j)=
\begin{cases}
1,  & \text{if } i=j,\\
-1, & \text{if } i=j+1,\\
0,  & \text{otherwise},
\end{cases} \qquad i,j\in[n].
\] Set $B_n = U_n A_n U_n^T$.
Since $\det(U_n)=1$, we have $\det(B_n)=\det(A_n)$. It is straightforward to verify that
\begin{align*}
&B_n(i,j)\\
&=
\begin{cases}
A_n(1,1), & \textnormal{if $i=j=1$},\\
A_n(1,j)-A_n(1,j-1), & \textnormal{if $i=1$ and $j\ge 2$},\\
A_n(i,1)-A_n(i-1,1), & \textnormal{if $j=1$ and $i\ge 2$},\\
A_n(i,j)-A_n(i-1,j)-A_n(i,j-1)+A_n(i-1,j-1), & \textnormal{if $i,j\ge 2$},
\end{cases}\\
&=
\begin{cases}
0, & \textnormal{if $i=j=1$},\\
2j-1, & \textnormal{if $i=1$ and $j\geq 2$},\\
2i-1, & \textnormal{if $j=1$ and $i\geq 2$},\\
-2(2i-1), & \textnormal{if $i\geq 2$ and $j=i$},\\
0, & \textnormal{if $i\geq 2$ and $j\neq i$}.
\end{cases}
\end{align*} Thus, $B_n$ is the symmetric arrowhead matrix
\[
B_n=
\begin{pmatrix}
0      & 3      & 5      & 7      & \cdots & 2n-1\\
3      & -2\cdot 3 & 0      & 0      & \cdots & 0\\
5      & 0      & -2\cdot 5 & 0      & \cdots & 0\\
7      & 0      & 0      & -2\cdot 7 & \ddots & \vdots\\
\vdots & \vdots & \vdots & \ddots & \ddots & 0\\
2n-1   & 0      & \cdots & \cdots & 0 & -2(2n-1)
\end{pmatrix}.
\] Thus, $B_n$ may be written as a block matrix
\[
B_n=
\begin{pmatrix}
0 & z^T\\
z & D
\end{pmatrix},
\] where
$z=(3, 5, \ldots, 2n-1)^T$ and 
$D=-2\diag(z)$. Since $D$ is invertible, by the second part of Lemma \ref{Schur},
\begin{align*}
\det(B_n)&=-\det(D)z^TD^{-1}z\\
&=-\left(\prod_{k=2}^n(-2(2k-1))\right)\sum_{k=2}^n \frac{(2k-1)^2}{-2(2k-1)}\\
&=-((-2)^{n-1}(2n-1)!!)\left(-\frac12(n^2-1)\right)\\
&=(-1)^{n-1}\frac{n+1}{2}\cdot \frac{(2n-1)!}{(n-2)!}.\qedhere
\end{align*}
\end{proof}

\begin{corollary}
Let $n\in\NN$ and let $A_n$ be the matrix defined in Theorem \ref{t1}. Then the Maple expression 
\[
\frac12\sum_{j=0}^{n}\mathrm{count}(\mathrm{Permutation}(2n-1),\mathrm{size}=n+1)
\]
is equal to $|\det(A_n)|$.
\end{corollary}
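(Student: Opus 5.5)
The plan is to evaluate the Maple expression in closed form and then compare it with Theorem \ref{t1}. The key observation is that the summand $\mathrm{count}(\mathrm{Permutation}(2n-1),\mathrm{size}=n+1)$ does not depend on the summation index $j$. In Maple's combinat/combstruct conventions it counts the ordered selections of $n+1$ distinct elements from a set of $2n-1$ elements, that is, the number of $(n+1)$-permutations of a $(2n-1)$-set. The first step is therefore to record that this count equals
\[
(2n-1)(2n-2)\cdots(2n-1-(n+1)+1)=\frac{(2n-1)!}{(n-2)!}
\]
when $n+1\le 2n-1$, i.e.\ for $n\ge 2$. It equals $0$ when $n+1>2n-1$, i.e.\ for $n=1$.

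Next, since the sum runs over $j=0,1,\ldots,n$, it has $n+1$ identical terms. For $n\ge2$ the expression therefore equals
\[
\frac12 (n+1)\frac{(2n-1)!}{(n-2)!}=\frac{n+1}{2}\cdot\frac{(2n-1)!}{(n-2)!}.
\]
By Theorem \ref{t1}, $\det(A_n)=(-1)^{n-1}\frac{n+1}{2}\cdot \frac{(2n-1)!}{(n-2)!}$. The factor $\frac{n+1}{2}\cdot\frac{(2n-1)!}{(n-2)!}$ is nonnegative, so taking absolute values gives exactly the expression above.

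The remaining case is $n=1$, which I would handle separately. Here $A_1=(|1-1|)=(0)$, so $|\det(A_1)|=0$. On the Maple side there are no $2$-permutations of a $1$-element set, so each summand is $0$ and the expression is $0$ as well.

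The only real obstacle is interpretive rather than mathematical. One must pin down that Maple's $\mathrm{count}(\mathrm{Permutation}(m),\mathrm{size}=k)$ means the falling factorial $m!/(m-k)!$, and that it is $0$ when $k>m$. I would state this convention explicitly, citing the definition of the combstruct structure $\mathrm{Permutation}$, before carrying out the two steps above.
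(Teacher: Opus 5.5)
Your proposal is correct and follows the paper's proof essentially step by step. Like the paper, you identify the Maple count as the falling factorial $(2n-1)!/(n-2)!$ for $n\ge 2$, multiply by the $n+1$ identical summands and the factor $\frac12$, and compare with the absolute value of the formula in Theorem~\ref{t1}. You also treat $n=1$ separately, using $A_1=(0)$ and the vanishing count.
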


\begin{proof}
In Maple, the expression $\mathrm{count}(\mathrm{Permutation}(m),\mathrm{size}=r)$ counts permutations of length $r$ drawn from $m$ symbols without repetition. Thus, for $n\geq 2$,
\[
\mathrm{count}(\mathrm{Permutation}(2n-1),\mathrm{size}=n+1)=\frac{(2n-1)!}{(n-2)!}.
\] Since the summand is independent of $j$, the sum merely multiplies the summand by $n+1$. The factor $\frac{1}{2}$ brings the expression exactly to $|\det(A_n)|$.
Notice that for $n=1$ there are no permutations of length 
$2$ drawn without repetition from $1$ symbol and Maple returns $0$, which is indeed $|\det(A_1)|$.
\end{proof}

\subsection{\texorpdfstring{\seqnum{A323254}}{}}

In the following result we obtain a closed form formula for the determinant of a certain Toeplitz matrix with which \seqnum{A323254} is concerned. 

\begin{theorem}
For $n\in\NN$ let $T_n$ be the Toeplitz matrix whose first row is
\[
(2n-1, n-1, n-2,\dots,1)
\]
and whose first column is
\[
(2n-1, 2n-2, 2n-3,\dots, n).
\] Then
\[
\det(T_n)= n(n+1)^{n-1}+\frac{n-1}{4}((n-1)^{n-1}+(n+1)^{n-1}).
\]
\end{theorem}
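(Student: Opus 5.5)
The plan is to reduce $T_n$, by unimodular row and column operations, to a bidiagonal matrix plus a low-rank correction, and then evaluate with Sylvester's determinant theorem \eqref{SYL} or the Schur formula (Lemma~\ref{Schur}). First I write the entries explicitly: $T_n(i,j)=n+i-j$ if $i<j$, and $T_n(i,j)=2n-1-(i-j)$ if $i\geq j$. The powers $(n+1)^{n-1}$ and $(n-1)^{n-1}$ in the claimed formula suggest that a bidiagonal matrix with diagonal $n+1$ and off-diagonal $\pm(n-1)$ should appear.

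\textbf{Step 1 (row differences).} For $i=1,\dots,n-1$, replace row $i$ by row $i$ minus row $i+1$, and keep row $n$. A short check gives, for $i\le n-1$, that the new row $i$ has entry $1$ for $j\le i$, entry $-n$ for $j=i+1$, and entry $-1$ for $j\ge i+2$. Equivalently, the first $n-1$ rows equal $-\mathbf 1^T$ plus the row of a matrix $K$ with $K(i,j)=2$ for $j\le i$, $K(i,i+1)=-(n-1)$, and $0$ otherwise. The last row stays $(2n-1+j-n)_j=(n-1+j)_j$, which is affine in $j$.

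\textbf{Step 2 (column differences).} Next, for $j=1,\dots,n-1$, replace column $j$ by column $j$ minus column $j+1$. This kills the constant part $-\mathbf 1^T$ except in the last column. It turns $K$ into a bidiagonal block with $n+1$ on the diagonal and $-(n-1)$ just below it: the entry at $(i,i)$ is $2+(n-1)$ and the entry at $(i,i-1)$ is $-(n-1)$. The affine last row becomes $(-1,\dots,-1,\,2n-1)$. The resulting matrix has the block form
\[
\begin{pmatrix} P & w\\ -\mathbf 1^T & 2n-1\end{pmatrix},
\]
where $P$ is the $(n-1)\times(n-1)$ bidiagonal matrix and $w$ is the transformed last column (essentially $-\mathbf 1$ plus a correction in the last entry from $K$). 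The determinant is unchanged, since both operations are unimodular.

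\textbf{Step 3 (evaluation).} Apply the first part of Lemma~\ref{Schur} with $A=P$, where $\det P=(n+1)^{n-1}$. This gives $\det T_n=(n+1)^{n-1}\bigl(2n-1+\mathbf 1^TP^{-1}w\bigr)$. The inverse of a bidiagonal Toeplitz matrix is explicit: $P^{-1}(i,j)=\frac{1}{n+1}\left(\frac{n-1}{n+1}\right)^{i-j}$ for $i\ge j$. Hence $\mathbf 1^TP^{-1}w$ reduces to geometric sums in $q=\frac{n-1}{n+1}$. Summing these and multiplying by $(n+1)^{n-1}$ should produce $n(n+1)^{n-1}+\frac{n-1}{4}\bigl((n-1)^{n-1}+(n+1)^{n-1}\bigr)$. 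Small cases $n=1,2$ are checked directly.

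I expect the main obstacle to be the bookkeeping at the boundary: the exact form of the last row and column after both rounds of operations, and in particular the corrections in the last entry of $w$. An off-by-one there changes the constant $\frac{n-1}{4}$. If the Schur computation gets messy, I would fall back on expanding along the last row and computing the resulting bidiagonal-plus-one-column minors directly, each of which is a product of powers of $n+1$ and $n-1$.
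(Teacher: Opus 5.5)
Your reduction works and is a different, shorter route than the paper's. However, Step 2 places the off-diagonal entries of $P$ on the wrong side, and Step 3 as written would give the wrong value. Row $i$ of $K$ is $(2,\dots,2,-(n-1),0,\dots,0)$, with the $-(n-1)$ in column $i+1$. Applying $C_j\leftarrow C_j-C_{j+1}$ gives:
\begin{itemize}
\item $2-2=0$ in column $i-1$;
\item $2+(n-1)=n+1$ in column $i$;
\item $-(n-1)-0=-(n-1)$ in column $i+1$.
\end{itemize}
So $P$ is \emph{upper} bidiagonal. This is also why the row-$(n-1)$ entry $-(n-1)$ spills into $w=(-1,\dots,-1,-n)^T$, which you anticipated correctly. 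Consequently $P^{-1}(i,j)=\frac{1}{n+1}q^{j-i}$ for $j\ge i$, not $q^{i-j}$ for $i\ge j$. The difference matters numerically. For $n=3$ your reduced matrix is $\begin{pmatrix}4&-2&-1\\0&4&-3\\-1&-1&5\end{pmatrix}$, with determinant $58=\det T_3$. Moving the $-2$ below the diagonal, as your $P^{-1}$ implicitly does, gives $62$.

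With the orientation corrected, the computation closes quickly, and you should carry it out rather than say it ``should produce'' the answer. The column sums of $P^{-1}$ are $y_j=\frac{1}{n+1}\sum_{k=0}^{j-1}q^k=\frac{1-q^j}{2}$. Using $(n+1)q=n-1$, this gives
\[
\mathbf 1^TP^{-1}w=-\sum_{j=1}^{n-1}y_j-(n-1)y_{n-1}=-\frac{(n-1)(1+q^{n-1})}{4}-\frac{(n-1)(1-q^{n-1})}{2}=-\frac{(n-1)(3-q^{n-1})}{4}.
\]
Hence
\[
\det T_n=(n+1)^{n-1}\Bigl(2n-1-\frac{(n-1)(3-q^{n-1})}{4}\Bigr)=\frac{(5n-1)(n+1)^{n-1}+(n-1)^n}{4},
\]
which equals $n(n+1)^{n-1}+\frac{n-1}{4}\bigl((n-1)^{n-1}+(n+1)^{n-1}\bigr)$.

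The paper differences rows the other way ($R_i-R_{i-1}$, keeping the first row $(2n-1,n-1,\dots,1)$) and then differences columns as you do. Its leading block $B_n$ has diagonal $n-1$ and subdiagonal $-(n+1)$, but also a dense first row $(n,1,\dots,1)$. It therefore needs a separate cofactor expansion to get $\det B_n=\frac{(n-1)^{n-1}+(n+1)^{n-1}}{2}$, and then a recurrence to solve $B_nx=\mathbf 1$, before applying the Schur formula. You instead keep the last row, which is affine in $j$ and collapses to $(-1,\dots,-1,2n-1)$. This makes the leading block purely bidiagonal, so $\det P=(n+1)^{n-1}$ is immediate and the Schur complement is a single geometric sum.
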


\begin{proof}
We have $T_1=(1)$ and $\det(T_1)=1$, agreeing with the asserted formula. Thus, assume that $n\geq 2$. First, we notice that 
\[
T_n(i,j)=
\begin{cases}
2n-1-(i-j), & \textnormal{if $i\ge j$},\\
n-(j-i), & \textnormal{if $i<j$},
\end{cases}\qquad i,j\in[n].
\] 
For each $i=n,n-1,\ldots,2$ subtract the $i-1$th row of $T_n$ from the $i$th row and let $N_n$ denote the resulting matrix. It is straightforward to verify that
\[
N_n(i,j)=
\begin{cases}
2n-1,& \textnormal{if $i=j=1$},\\
n-j+1,& \textnormal{if $i=1$ and $j\geq 2$},\\
-1, & \textnormal{if $i\geq 2$ and $j\le i-1$},\\
n,  & \textnormal{if $i\geq 2$ and $j=i$},\\
1,  & \textnormal{if $i\geq 2$ and $j\ge i+1$},
\end{cases}\qquad i,j\in[n].
\]
Now, for each $j=1,2,\ldots,n-1$, subtract the $j+1$th column of $N_n$ from the $j$th column and let $K_n$ denote the resulting matrix. It is straightforward to verify that
\[
K_n(i,j)=
\begin{cases}
n, & \textnormal{if } i=j=1,\\[2pt]
1, & \textnormal{if } i=1 \textnormal{ and } 2\le j\le n,\\[2pt]
-(n+1), & \textnormal{if } 2\le i\le n \textnormal{ and } j=i-1,\\[2pt]
n-1, & \textnormal{if } 2\le i\le n-1 \textnormal{ and } j=i,\\[2pt]
1, & \textnormal{if } 2\le i\le n-1 \textnormal{ and } j=n,\\[2pt]
n, & \textnormal{if } i=j=n,\\[2pt]
0, & \textnormal{otherwise},
\end{cases}\qquad i,j\in[n].
\]
Thus, $K_n$ may be written as a block matrix
\[
K_n=\begin{pmatrix}
B_n & \mathbf{1}\\
c^T & n
\end{pmatrix},
\]
where $\mathbf{1}$ is the all-ones vector of size $n-1$, $c\in\RR^{n-1}$ is given by $c^T=(0,\dots,0,-(n+1))$,
and $B_n$ is the square matrix of size $n-1$ given by \[
B_n(i,j)=
\begin{cases}
n, & \textnormal{if } i=1 \textnormal{ and } j=1,\\
1, & \textnormal{if } i=1 \textnormal{ and } 2\le j\le n-1,\\
-(n+1), & \textnormal{if } 2\le i\le n-1 \textnormal{ and } j=i-1,\\
n-1, & \textnormal{if } 2\le i\le n-1 \textnormal{ and } j=i,\\
0, & \textnormal{otherwise},
\end{cases}\qquad i,j\in[n-1].
\] Expanding $\det(B_n)$ along the first row yields
\[
\det(B_n)=\sum_{j=1}^{n-1} B_n(1,j) (C_n)_{1j},
\]
where the $(C_n)_{1j}$s are the corresponding cofactors. By induction on $j\in[n-1]$, it is not hard to see that 
$(C_n)_{1j}=(n-1)^{n-1-j}(n+1)^{j-1}$. Thus,
\begin{align*}
\det(B_n)&=n(n-1)^{n-2}+\sum_{j=1}^{n-2} (n-1)^{n-2-j}(n+1)^j\\
&=\frac{(n-1)^{n-1}+(n+1)^{n-1}}{2}.    
\end{align*}
In particular, $B_n$ is invertible and by the first part of Lemma \ref{Schur},
\[
\det(K_n)=\det(B_n)(n-c^T B_n^{-1}\mathbf{1}).
\]
Let $x=B_n^{-1}\mathbf{1}$. To calculate $x$, we solve the system $B_n x=\mathbf{1}$, which may be written as 
\begin{align}
n x_1+x_2+\cdots+x_{n-1}&=1,\label{ee254}\\
-(n+1) x_{i-1}+(n-1)x_i&=1,\qquad 2\le i\le n-1.\label{eg1}
\end{align}
From \eqref{eg1} we obtain the recurrence
\begin{equation}\label{eg2254}
x_i=\frac{1}{n-1}+\frac{n+1}{n-1}x_{i-1},
\qquad 2\le i\le n-1.
\end{equation}
Solving \eqref{eg2254} gives
\begin{equation}\label{eww2254}
x_i=\left(\frac{n+1}{n-1}\right)^{i-1}x_1+\frac12\left(\left(\frac{n+1}{n-1}\right)^{i-1}-1\right), \qquad 2\leq i\leq n-1.
\end{equation}
From \eqref{ee254} and \eqref{eww2254} we obtain
\begin{equation}\label{ew2254}
x_1=\frac{2n}{n-1+\dfrac{(n+1)^{n-1}}{(n-1)^{n-2}}}-\frac12.
\end{equation} Taking $i=n-1$ in \eqref{eww2254} and using \eqref{ew2254}, we have
\[
x_{n-1}=\frac{2n(n+1)^{n-2}}{(n-1)^{n-1}+(n+1)^{n-1}}-\frac12.
\]
We conclude that 
\begin{align*}
\det(T_n)&=\det(N_n)=\det(K_n)=\det(B_n)(n-c^T x)\\
&=\det(B_n)(n+(n+1)x_{n-1})\\
&= \frac{(n-1)^{n-1}+(n+1)^{n-1}}{2}\left(n+(n+1)\left(\frac{2n(n+1)^{n-2}}{(n-1)^{n-1}+(n+1)^{n-1}}-\frac12\right)\right)\\
&=n(n+1)^{n-1}+\frac{n-1}{4}((n-1)^{n-1}+(n+1)^{n-1}).\qedhere
\end{align*}
\end{proof}

\subsection{\texorpdfstring{\seqnum{A351154}}{}}

The statement of the following theorem  confirms a conjecture by Spezia stated in \seqnum{A351154}.

\begin{theorem}
For $n\in\NN$ let $A_n$ be the square matrix of size $n$ defined by \[A_n(i,j)=f(n,\min(i,j)) + |i-j|, \qquad i,j\in[n],\] where 
\[f(n,k)=n(k-1)-\frac{k(k-3)}{2},\qquad k\in[n].\] 
Then, for $n\geq 2$, 
$\det(A_n)=-(n-2)!$.
\end{theorem}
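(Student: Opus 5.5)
The plan is to reuse the congruence trick from the proof of Theorem \ref{t1}. Let $U_n$ be the unit lower bidiagonal matrix defined there, with $1$ on the diagonal and $-1$ on the subdiagonal, and set $B_n=U_nA_nU_n^T$. Since $\det(U_n)=1$, we get $\det(B_n)=\det(A_n)$. The entries of $B_n$ are given by the same four-case formula as in that proof: $B_n(1,1)=A_n(1,1)$, first differences on the first row and first column, and mixed second differences $A_n(i,j)-A_n(i-1,j)-A_n(i,j-1)+A_n(i-1,j-1)$ for $i,j\ge 2$.

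First I record the increment of $f$. A direct computation gives $f(n,1)=1$ and
\[
f(n,k)-f(n,k-1)=n-k+2,\qquad 2\le k\le n .
\]
With this I compute the entries of $B_n$ in the following order.
\begin{itemize}
\item $B_n(1,1)=f(n,1)=1$.
\item For $j\ge 2$ we have $B_n(1,j)=A_n(1,j)-A_n(1,j-1)=(1+j-1)-(1+j-2)=1$, since $\min(1,j)=\min(1,j-1)=1$. By symmetry $B_n(i,1)=1$ for $i\ge2$.
\item For $i,j\ge2$ with $i<j$, all four minima in the second difference are row indices. The $f$-terms then cancel in pairs, and the $|i-j|$-terms give $(j-i)-(j-i+1)-(j-i-1)+(j-i)=0$. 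The case $i>j$ follows by symmetry; I will check the boundary case $|i-j|=1$ explicitly, and it also gives $0$.
\item For $i=j\ge2$, the second difference is $f(n,i)-2(f(n,i-1)+1)+f(n,i-1)=(n-i+2)-2=n-i$.
\end{itemize}

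Hence $B_n$ is a symmetric arrowhead matrix with corner entry $1$, all other entries of the first row and first column equal to $1$, and diagonal entries $n-2,n-3,\dots,1,0$ in positions $2,\dots,n$. The last diagonal entry is $0$, so Lemma \ref{Schur} cannot be applied directly as in Theorem \ref{t1}. Instead I expand along the $n$th row, whose only nonzero entry is $B_n(n,1)=1$. The resulting minor has its last column equal to $(1,0,\dots,0)^T$, so I then expand along that column. Together these two expansions carry the sign $(-1)^{n+1}(-1)^{n}=-1$ and leave the diagonal block $\diag(n-2,\dots,1)$. Therefore $\det(A_n)=\det(B_n)=-(n-2)!$.

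The main obstacle is only bookkeeping. One must verify that the second differences vanish, including on the sub- and superdiagonal where the minimum switches between the row and column index. One must also track the sign in the two cofactor expansions, and it is worth checking $n=2$ separately, where $B_2=\begin{pmatrix}1&1\\1&0\end{pmatrix}$ has determinant $-1=-0!$.
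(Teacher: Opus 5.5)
Your proof is correct and is essentially the paper's argument. The congruence $U_nA_nU_n^T$ is exactly the paper's successive row and column differencing; it yields the same arrowhead matrix (the paper's $C_n$), and you then make the same two cofactor expansions, along the last row and then the last column, with total sign $(-1)^{n+1}(-1)^n=-1$.
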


\begin{proof}
By direct calculations it is immediately verified that the assertion holds for $n=2$. Thus, assume that $n\geq 3$. By definition of $A_n$,
\[
A_n(i,j)=
\begin{cases}
f(n,i)+j-i,& \textnormal{if $i\leq j$},\\    
f(n,j)+i-j, &\textnormal{if $i> j$},
\end{cases}\qquad i,j\in[n].
\]
For every $2\le k\le n$ we have $f(n,k)-f(n,k-1)=n-k+2$. For each $i=n,n-1,\dots,2$, subtract the $i-1$th row of $A_n$ from the $i$th row and let $B_n$ denote the resulting matrix. It is straightforward to verify that
\[
B_n(i,j)=
\begin{cases}
j, &\textnormal{if $i=1$},\\
1, & \textnormal{if $i\geq 2$ and $j<i$},\\
n-i+1, & \textnormal{if $i\geq 2$ and $j\ge i$},
\end{cases} \qquad i,j\in[n].
\] For each $j=n,n-1,\dots,2$, subtract the $j-1$th column of $B_n$ from the $j$th column and let $C_n$ denote the resulting matrix. Then
\[
C_n(i,j)=
\begin{cases}
1, &\textnormal{if $i=1$ or $j=1$},\\
n-i, & \textnormal{if $i\geq 2$ and $j=i$},\\
0, & \textnormal{otherwise},
\end{cases} \qquad i,j\in[n].
\] Let $R_n$ be the matrix obtained from $C_n$ by deleting its first column and last row. Then
\[
R_n(i,j)=
\begin{cases}
1, & \textnormal{if } i=1,\\
n-i, & \textnormal{if } i\geq 2 \textnormal{ and } j=i-1,\\
0, & \textnormal{otherwise},
\end{cases}
\qquad i,j\in[n-1].
\] Expanding $\det(C_n)$ along the last row of $C_n$, which is $(1, 0, \ldots,0)$, gives
$\det(C_n)=(-1)^{n+1}\det(R_n)$. Let $S_n$ be the matrix obtained from $R_n$ by deleting its first row and last column. Expanding $\det(R_n)$ along its last column, which is $(1,0,\ldots,0)^T$, gives
$\det(R_n) = (-1)^n \det(S_n)$. We have $S_n=\diag(n-2,n-3,\ldots,1)$. Thus,
\[\det(A_n)=(-1)^{n+1}(-1)^n (n-2)! = -(n-2)!. \qedhere\]
\end{proof}

\subsection{\texorpdfstring{\seqnum{A359559}}{}}

\noindent Sequence \seqnum{A359559} corresponds to the determinant of a certain Hermitian Toeplitz matrix. Using LU decomposition we obtain a closed form formula for the determinant. This immediately settles the conjectures regarding recurrence and ordinary and exponential generating functions. In this section $i=\sqrt{-1}$.

\begin{theorem}\label{t59}
Set $T_0 = (1)$ and, for $n\in\NN$, let $T_n$ be the Hermitian Toeplitz matrix of size $n$  whose first row is
$(1,2i,3i,\ldots,ni)$, i.e., 
\[
T_n(r,c)=
\begin{cases}
1, & \textnormal{if $r=c$},\\
i(c-r+1), & \textnormal{if $c>r$},\\
-i(r-c+1), & \textnormal{if $r>c$},
\end{cases}\qquad r,c\in[n].
\]
Set $\Delta_0=1$ and, for $k\in[n]$, let

\begin{align*}
\Delta_k&=
\frac{k^2+(4+i)k+4+4i}{8}(1+i)^k
+
\frac{k^2+(4-i)k+4-4i}{8}(1-i)^k,\\
\mu_k&=
\begin{cases}
0, & \textnormal{if $k=1$},\\
\left(-\dfrac{1+i}{8}k^2+\dfrac{1-i}{8}k+\dfrac{1+5i}{8}\right)(1+i)^k
+\dfrac{3-i}{8}(1-i)^k, & \textnormal{if $k\ge 2$},
\end{cases}\\
\gamma_k&=
\begin{cases}
i, & \textnormal{if $k=1$},\\
\left(\dfrac{1+i}{4}k+\dfrac{1+2i}{4}\right)(1+i)^k
+\dfrac{1}{4}(1-i)^k, & \textnormal{if $k\ge 2$}.
\end{cases}
\end{align*}
Then $T_n$ has an $LU$ decomposition $T_n=L_nU_n$ where, for $r,c\in[n]$,
\begin{align*} 
U_n(r,c)&=
\begin{cases}
0, & \textnormal{if $r>c$},\\
\dfrac{\Delta_r}{\Delta_{r-1}}, & \textnormal{if $r=c$},\\
\dfrac{\mu_r+c\gamma_r}{\Delta_{r-1}}, & \textnormal{if $c\ge r+1$},
\end{cases}\\
L_n(r,c)&=
\begin{cases}
0, & \textnormal{if $r<c$},\\
1, & \textnormal{if $r=c$},\\
\dfrac{\overline{\mu_c+r\gamma_c}}{\Delta_c}, & \textnormal{if $r\ge c+1$}.
\end{cases}
\end{align*}
\end{theorem}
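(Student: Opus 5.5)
Since $L_n$ is unit lower triangular and $U_n$ is upper triangular, the claim $T_n=L_nU_n$ is equivalent to the entrywise identity
\[
T_n(r,c)=\sum_{k=1}^{\min(r,c)}L_n(r,k)U_n(k,c),\qquad r,c\in[n].
\]
We will verify this identity directly. Note first that the proposed factors of $T_n$ do not depend on $n$ beyond truncation: $L_n$ and $U_n$ are the leading $n\times n$ blocks of infinite matrices. So it suffices to prove the identity for all $r,c\in\NN$.

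Since $T_n$ is Hermitian, $T_n(c,r)=\overline{T_n(r,c)}$. Also, the proposed factors satisfy
\[
L_n(r,k)=\overline{U_n(k,r)}\,\frac{\Delta_{k-1}}{\Delta_k}
\]
for $r>k$. Here we need $\Delta_k$ to be real, which holds because its two summands are complex conjugates. Hence $L_n=U_n^{*}D^{-1}$ with $D=\diag(\Delta_k/\Delta_{k-1})$. This says exactly that $T_n=U_n^{*}D^{-1}U_n$, which is a symmetric statement. So it suffices to check the entries with $r\le c$, i.e.
\[
T(r,c)=\sum_{k=1}^{r}\frac{\overline{(\mu_k+r\gamma_k)}\,(\mu_k+c\gamma_k)}{\Delta_k\Delta_{k-1}}
\]
for $r<c$, together with the analogous identity on the diagonal. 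On the diagonal the $k=r$ term is $\Delta_r/\Delta_{r-1}$, and the lower terms carry $|\mu_k+r\gamma_k|^2$.

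Each right-hand side is a polynomial of degree at most $1$ in $c$. This suggests an induction on $r$ using telescoping. Define
\[
S_r(x,y)=\sum_{k=1}^{r}\frac{\overline{(\mu_k+x\gamma_k)}(\mu_k+y\gamma_k)}{\Delta_{k-1}\Delta_k},
\]
which is a bilinear expression in $(x,y)$. We need
\[
S_{r-1}(r,c)+\frac{\mu_r+c\gamma_r}{\Delta_{r-1}}=i(c-r+1)\quad (c>r),\qquad S_{r-1}(r,r)+\frac{\Delta_r}{\Delta_{r-1}}=1.
\]
The plan is to find closed forms for the three sums $\sum|\mu_k|^2/(\Delta_{k-1}\Delta_k)$, $\sum\overline{\mu_k}\gamma_k/(\Delta_{k-1}\Delta_k)$ and $\sum|\gamma_k|^2/(\Delta_{k-1}\Delta_k)$ in the form $a_r/\Delta_r$. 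Each such closed form can then be verified by checking that consecutive differences match, which is the identity
\[
a_r\Delta_{r-1}-a_{r-1}\Delta_r=\text{numerator}_r.
\]

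A more conceptual alternative is to derive $\mu_k,\gamma_k$ from the Gaussian elimination recursion. Row $r$ of $U$ equals row $r$ of $T$ minus $\sum_{k<r}L(r,k)U(k,\cdot)$. Because $T(r,c)$ is affine in $c$ for $c>r$, every row of $U$ to the right of the diagonal is affine in $c$, and this explains the ansatz $\mu_r+c\gamma_r$. Matching the constant and linear coefficients in $c$ gives two coupled recursions for $(\mu_r,\gamma_r)$. Matching the diagonal gives a recursion for $\Delta_r$. These should reduce to linear recurrences whose characteristic roots are $1\pm i$, with multiplicity up to $3$ for $1+i$. The root structure is read off from the given formulas: for instance, $\Delta_k$ has quadratic-in-$k$ coefficients times $(1\pm i)^k$.

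The main obstacle is purely computational. The key step is verifying the telescoping or recurrence identities among expressions of the form $\mathrm{poly}(k)(1\pm i)^k$, including the special cases $k=1$, where $\mu_1=0$ and $\gamma_1=i$ deviate from the general formula. I would first check small $r$ (say $r\le3$) by hand. For the general step I would expand all products into terms $k^j(1+i)^{ak}(1-i)^{bk}$, using $(1+i)(1-i)=2$, and compare coefficients of each such term. This reduces the proof to finitely many polynomial identities in $k$, which are routine to verify.
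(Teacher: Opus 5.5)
Your strategy is in substance the paper's own: verify $T_n=L_nU_n$ entry by entry. The paper, however, never does this for general $n$. It calls the computation straightforward but tedious and checks $n\le 10$ by computer in exact arithmetic.

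Your reductions are correct, and they are what would turn this into a proof for all $n$:
\begin{itemize}
\item $\Delta_k$ is real, so $L=U^*D^{-1}$ and only the entries with $r\le c$ need checking.
\item Since $c$ ranges over all integers $>r$, each off-diagonal condition splits into a constant term and a $c$-coefficient.
\item You should also check that $\Delta_k\ne 0$. Write $\Delta_k=\frac14\operatorname{Re}\bigl(((k+2)^2+(k+4)i)(1+i)^k\bigr)$ and argue by cases on $k\bmod 8$.
\end{itemize}
However, your proposal stops exactly where the content begins. The closed forms $a_r$ are never produced, so what you have is a plan rather than a proof. The paper's argument has the same status.

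The plan does close. First,
\[
\sum_{k=1}^{r}\frac{|\gamma_k|^2}{\Delta_{k-1}\Delta_k}=\frac{\operatorname{Im}\bigl((1+i)^r\bigr)}{\Delta_r},
\]
and its increment identity $\operatorname{Im}((1+i)^r)\Delta_{r-1}-\operatorname{Im}((1+i)^{r-1})\Delta_r=|\gamma_r|^2$ is a short coefficient comparison. Next, set
\[
X_r=\gamma_{r+1}+(r+1)\operatorname{Im}\bigl((1+i)^r\bigr)=\frac{-1+3i}{4}(1+i)^r+\frac{1+(2r+1)i}{4}(1-i)^r .
\]
The $c$-coefficient and constant-term conditions then force the other two sums to be $i-X_r/\Delta_r$ and $\bigl(i\Delta_r+(r+1)\overline{X_r}-\mu_{r+1}\bigr)/\Delta_r$. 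You must verify that these vanish at $r=0$ and have increments $\overline{\mu_r}\gamma_r/(\Delta_{r-1}\Delta_r)$ and $|\mu_r|^2/(\Delta_{r-1}\Delta_r)$. Both are exponential-polynomial identities with bases $2,\pm 2i$, and both hold. Finally, given the two off-diagonal conditions, the diagonal condition is equivalent to the single identity $\mu_r+r\gamma_r=\Delta_r+(i-1)\Delta_{r-1}$, which is also routine.

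Three smaller points:
\begin{itemize}
\item Your first display is wrong for the term $k=r$. There $L(r,r)=1$, and $\overline{\mu_r+r\gamma_r}=\Delta_r-(1+i)\Delta_{r-1}\ne\Delta_r$. Your later display with $S_{r-1}$ is the correct one.
\item $k=1$ is not an exceptional case: the general formulas already give $\mu_1=0$ and $\gamma_1=i$.
\item The Gaussian-elimination alternative is unnecessary. Its claim of linear recurrences is also unsupported, since every elimination step divides by some $\Delta_k$.
\end{itemize}
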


\begin{proof}
It is straightforward but tedious to verify the decomposition by direct calculations. For additional confirmation, the following Python program verifies the identity $T_n=L_nU_n$ in exact arithmetic for $1\leq n\leq 10$.

\begin{verbatim}
import sympy as sp

I = sp.I

def Delta(k):
    if k == 0:
        return sp.Integer(1)

    return sp.simplify(
        sp.Rational(1, 8) * (k**2 + (4 + I) * k + 4 + 4 * I) * (1 + I)**k
        + sp.Rational(1, 8) * (k**2 + (4 - I) * k + 4 - 4 * I) * (1 - I)**k
    )

def mu(k):
    if k == 1:
        return sp.Integer(0)

    return sp.simplify(
        (
            -sp.Rational(1, 8) * (1 + I) * k**2
            + sp.Rational(1, 8) * (1 - I) * k
            + sp.Rational(1, 8) * (1 + 5 * I)
        ) * (1 + I)**k
        + sp.Rational(1, 8) * (3 - I) * (1 - I)**k
    )

def gamma(k):
    if k == 1:
        return I

    return sp.simplify(
        (
            sp.Rational(1, 4) * (1 + I) * k
            + sp.Rational(1, 4) * (1 + 2 * I)
        ) * (1 + I)**k
        + sp.Rational(1, 4) * (1 - I)**k
    )

def T_matrix(n):
    T = sp.zeros(n, n)

    for r in range(1, n + 1):
        for c in range(1, n + 1):
            if r == c:
                T[r - 1, c - 1] = 1
            elif c > r:
                T[r - 1, c - 1] = I * (c - r + 1)
            else:
                T[r - 1, c - 1] = -I * (r - c + 1)

    return T


def U_matrix(n):
    U = sp.zeros(n, n)

    for r in range(1, n + 1):
        for c in range(1, n + 1):
            if r > c:
                U[r - 1, c - 1] = 0
            elif r == c:
                U[r - 1, c - 1] = Delta(r) / Delta(r - 1)
            else:
                U[r - 1, c - 1] = (mu(r) + c * gamma(r)) / Delta(r - 1)

    return U

def L_matrix(n):
    L = sp.zeros(n, n)

    for r in range(1, n + 1):
        for c in range(1, n + 1):
            if r < c:
                L[r - 1, c - 1] = 0
            elif r == c:
                L[r - 1, c - 1] = 1
            else:
                L[r - 1, c - 1] = sp.conjugate(mu(c) + r * gamma(c)) / Delta(c)

    return L

def verify_LU(n):
    T = T_matrix(n)
    L = L_matrix(n)
    U = U_matrix(n)

    difference = (L * U - T).applyfunc(sp.simplify)

    if difference == sp.zeros(n, n):
        print(f"n={n}: OK")
        return True

    print(f"n={n}: FAILED")
    print("L*U - T =")
    sp.pprint(difference)
    return False

for n in range(1, 11):
    verify_LU(n)
\end{verbatim}

\end{proof}

\begin{corollary}\label{c1}
Let $n\in\NN_0$ and consider the matrix $T_n$ from Theorem \ref{t59}. Then, 
\[\det(T_n)=
\frac{n^2+(4+i)n+4+4i}{8}(1+i)^n
+
\frac{n^2+(4-i)n+4-4i}{8}(1-i)^n.
\]
\end{corollary}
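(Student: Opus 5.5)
The plan is to read the determinant off the $LU$ decomposition of Theorem \ref{t59}. For $n=0$ we have $T_0=(1)$, so $\det(T_0)=1$. The right-hand side at $n=0$ equals $\frac{4+4i}{8}+\frac{4-4i}{8}=1=\Delta_0$, which settles this case.

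For $n\in\NN$, Theorem \ref{t59} gives $T_n=L_nU_n$ with $L_n$ unit lower triangular, so $\det(L_n)=1$. Hence
\[
\det(T_n)=\det(U_n)=\prod_{r=1}^{n}U_n(r,r)=\prod_{r=1}^{n}\frac{\Delta_r}{\Delta_{r-1}}.
\]
This product telescopes to $\Delta_n/\Delta_0=\Delta_n$, which is exactly the asserted formula.

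The one point needing care is that the quotients $\Delta_r/\Delta_{r-1}$ are well defined, i.e.\ $\Delta_k\neq 0$ for $0\le k\le n-1$. Theorem \ref{t59} implicitly asserts this, since its entries divide by $\Delta_{r-1}$ and $\Delta_c$; I would still confirm it independently.

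To do so, write $1+i=\sqrt2e^{i\pi/4}$. The two summands of $\Delta_k$ are complex conjugates, so
\[
\Delta_k=\tfrac14\operatorname{Re}\bigl((k^2+(4+i)k+4+4i)(1+i)^k\bigr),
\]
which is a real number. One can then check that $\Delta_k$ never vanishes: reduce to $k \bmod 8$ and show the resulting quadratic expression in $k$ has no nonnegative integer root. This is the only potential obstacle.

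If one prefers to avoid this issue entirely, there is a continuity-free alternative. Let $D_k=\det(T_k)$, the $k$th leading principal minor of $T_n$. The product of the first $k$ diagonal entries of $U_n$ equals $D_k$ whenever the factorization exists, and $D_0=1$. An induction on $k$ then gives $\Delta_k=D_k$ directly, using only the factorization identity.
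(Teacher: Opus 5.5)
Your proof is correct and is the same as the paper's: treat $n=0$ directly, then use $\det(L_n)=1$ and telescope the diagonal entries $\Delta_r/\Delta_{r-1}$ of $U_n$ to obtain $\Delta_n$. Your extra check that $\Delta_k\neq 0$ does hold, for example by writing $\Delta_k=\tfrac14\operatorname{Re}\bigl(((k+2)^2+i(k+4))(1+i)^k\bigr)$ and splitting $k$ modulo $8$; however, it is not needed beyond what Theorem~\ref{t59} already presupposes, and your ``alternative'' is the same telescoping argument rather than a way around that presupposition.
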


\begin{proof}
Direct calculations show that the assertion holds for $n=0$. Thus, assume that $n\geq 1$. We have $\det(T_n)=\det(L_n)\det(U_n)$. The matrix $L_n$ is a lower triangular matrix with $1$s on the main diagonal. Thus, $\det(L_n)=1$. The matrix $U_n$ is an upper triangular matrix with $\frac{\Delta_r}{\Delta_{r-1}}, 1\leq r\leq n$ on the main diagonal. Thus,
\[\det(T_n)=\det(U_n)=\prod_{r=1}^n\frac{\Delta_r}{\Delta_{r-1}}=\frac{\Delta_n}{\Delta_0}=\Delta_n.\qedhere\] 
\end{proof}

\begin{corollary}
For $n\in\NN_0$, let $t_n=\det(T_n)$, where $T_n$ is the matrix from Theorem \ref{t59}. Then $t_n$ satisfies the recurrence
\[
t_{n+6}=6t_{n+5}-18t_{n+4}+32t_{n+3}-36t_{n+2}+24t_{n+1}-8t_n.
\] Furthermore, the ordinary generating function for the $t_n$s is given by
\[\frac{1 - 5x + 9x^2 - 12x^3 + 10x^4 - 4x^5}{(1 - 2x + 2x^2)^3}.
\] Finally, the corresponding exponential generating function is given by
\[\frac{e^{x}}{2}(2(x+1)\cos x-(x^{2}+3x+2)\sin x).\] 
\end{corollary}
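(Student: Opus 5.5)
Everything follows from the closed form in Corollary \ref{c1}: $t_n=p(n)(1+i)^n+\overline{p}(n)(1-i)^n$, where $p(n)=\frac{n^2+(4+i)n+4+4i}{8}$ and $\overline p$ is its conjugate, both quadratic polynomials in $n$. The plan is to read off each assertion from the structure of this exponential-polynomial expression.

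\emph{Recurrence.} By the standard theory of linear recurrences, a sequence of the form $q_1(n)\alpha^n+q_2(n)\beta^n$ with $\deg q_1,\deg q_2\le 2$ is annihilated by the shift operator $(E-\alpha)^3(E-\beta)^3$. With $\alpha=1+i$ and $\beta=1-i$ we have $(E-\alpha)(E-\beta)=E^2-2E+2$, so it remains to expand
\[
(E^2-2E+2)^3=E^6-6E^5+18E^4-32E^3+36E^2-24E+8.
\]
This gives exactly the stated recurrence, and it holds for all $n\ge0$ because Corollary \ref{c1} includes $n=0$.

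\emph{Ordinary generating function.} Since the recurrence has characteristic polynomial $(x^2-2x+2)^3$, the generating function has the form $P(x)/(1-2x+2x^2)^3$ with $\deg P\le 5$. The coefficients of $P$ are determined by the initial terms $t_0,\dots,t_5$, computed from Corollary \ref{c1}: $P(x)=(1-2x+2x^2)^3\sum_{n=0}^{5}t_nx^n \bmod x^6$. Comparing with the stated numerator is then a finite check. Alternatively, one can use $\sum n^k\alpha^nx^n=(x\,d/dx)^k(1-\alpha x)^{-1}$ and add the conjugate terms.

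\emph{Exponential generating function.} Here I would compute directly. We have $\sum_n n\alpha^n x^n/n!=\alpha x e^{\alpha x}$ and $\sum_n n^2\alpha^nx^n/n!=(\alpha^2x^2+\alpha x)e^{\alpha x}$. Hence
\[
\sum_n p(n)\alpha^n\frac{x^n}{n!}=\frac{e^{\alpha x}}{8}\bigl(\alpha^2x^2+(5+i)\alpha x+4+4i\bigr).
\]
Adding the conjugate gives $\frac14\operatorname{Re}\bigl(e^{(1+i)x}Q(x)\bigr)$ with $Q(x)=2ix^2+(4+6i)x+4+4i$, using $\alpha^2=2i$ and $(5+i)(1+i)=4+6i$. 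Writing $e^{(1+i)x}=e^x(\cos x+i\sin x)$, the real part is
\[
e^x\bigl((4x+4)\cos x-(2x^2+6x+4)\sin x\bigr).
\]
Dividing by $4$ gives $\frac{e^x}{2}\bigl(2(x+1)\cos x-(x^2+3x+2)\sin x\bigr)$, as claimed.

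The main obstacle is bookkeeping rather than any conceptual step, since each part is routine once Corollary \ref{c1} is available. The step most prone to error is matching the OGF numerator, so I would verify it by multiplying out the first six terms, and cross-check the EGF against $t_0=1$ and $t_1$.
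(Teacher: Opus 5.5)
Your proposal is correct. The recurrence argument is exactly the paper's: write $t_n$ as quadratic polynomials times $(1\pm i)^n$, then apply the annihilator $(E-(1+i))^3(E-(1-i))^3=(E^2-2E+2)^3$ and expand.

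The paper's proof stops after the recurrence and never justifies the two generating functions. So your second and third paragraphs fill a gap in the paper rather than offer an alternative route, and both are sound.

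\textbf{Ordinary generating function.} Since the recurrence holds from $n=0$, the coefficient of $x^{n+6}$ in $(1-2x+2x^2)^3\sum_n t_nx^n$ vanishes, so the numerator is a polynomial of degree at most $5$ fixed by the first six terms. Corollary~\ref{c1} gives $t_0,\dots,t_5=1,1,-3,-16,-36,-40$. Multiplying $1+x-3x^2-16x^3-36x^4-40x^5$ by $1-6x+18x^2-32x^3+36x^4-24x^5$ and truncating after $x^5$ gives $1-5x+9x^2-12x^3+10x^4-4x^5$. So the finite check you deferred does go through.

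\textbf{Exponential generating function.} Your computation is also correct: the sums $\sum_n n^k\alpha^n x^n/n!$ for $k\le 2$, $\alpha^2=2i$, $(5+i)(1+i)=4+6i$, and taking the real part of $e^{(1+i)x}Q(x)$. It reproduces $t_0=t_1=1$.
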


\begin{proof}
By Corollary \ref{c1}, $t_n = \alpha_1(n)(1+i)^n + \alpha_2(n)(1-i)^n$, where $\alpha_1(n)$ and $\alpha_2(n)$ are two polynomials in $n$ of degree $2$. Thus, $t_n$ is annihilated by $(E-(1+i))^3(E-(1-i))^3$, where $E$ is the shift operator $Et_n=t_{n+1}$. Expanding
\[
(E-(1+i))^3(E-(1-i))^3= E^6 - 6E^5 + 18E^4 - 32E^3 + 36E^2 - 24E + 8,
\] we obtain the asserted recurrence.
\end{proof}

\subsection{\texorpdfstring{\seqnum{A355175}}{}}

For $m,n\in\NN$ let $A_{n}^{(m)}$ be the square matrix of size $n$ given by 
\begin{equation}\label{nm1}
A_{n}^{(m)}(i,j)=(i-j)^m+\delta_{ij},\qquad i,j\in[n]. 
\end{equation} In \cite{WW} Wang and Sun evaluated several Toeplitz-type determinants, one of which is the determinant of $A_{n}^{(1)}$. In \seqnum{A355175} and \seqnum{A355326} Sun conjectured closed-form formulas for $\det(A_{n}^{(2)})$ and $\det(A_{n}^{(3)})$, respectively. Sun also made the general conjecture that $\det(A_{n}^{(m)})$ has the form $1 + n^2(n^2-1)p_m(n)$, where $p_m(n)$ is a rational polynomial in $n$ with degree $(m+1)^2-4$. Following an approach suggested by a referee in \cite{WW}, namely, by using Sylvester's determinant theorem, we resolve all of Sun's conjectures mentioned above. 

For a polynomial $p(x)$ let $[x^n]p(x)$ denote the coefficient of $x^n$ in $p(x)$. For $r\in\NN_0$ and $n\in\NN$ set $s_r(n)=\sum_{k=1}^n k^r$. For $r\in\NN_0$ let $B_r$ denote the $r$th Bernoulli number and let $B_r(x)$ denote the $r$th Bernoulli polynomial. Recall that 
\[B_r(x) = \sum_{k=0}^r\binom{r}{k}B_kx^{r-k}.\] It is well-known  (e.g., \cite[(7.79)]{G}) that for every $r\in\NN_0$, 
\begin{equation}\label{eq1175}
0^r+s_r(n)=\frac{B_{r+1}(n+1)-B_{r+1}(0)}{r+1}.
\end{equation} In particular, $s_r(n)$ is a polynomial in $n$ of degree $r+1$. Indeed, since $\deg(B_{r+1}(x))=r+1$, the polynomial $B_{r+1}(n+1)$ in $n$ is of degree $r+1$. Subtracting the constant $B_{r+1}(0)$ and dividing by $r+1$ do not change the degree.

\begin{theorem}\label{ttT}
Let $m,n\in\NN$ and let $S_n^{(m)}$ be the square matrix of size $m+1$ given by
\[S_n^{(m)}(i,j)=\delta_{ij}+(-1)^i\binom{m}{i}s_{i+m-j}(n),\qquad 0\le i,j\le m.
\] Then
\begin{equation}\label{e1175}
\det(A_{n}^{(m)})=\det(S_n^{(m)}).
\end{equation}
\end{theorem}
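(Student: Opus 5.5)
The plan is to write $A_n^{(m)}=I_n+UV^T$ as a low-rank perturbation of the identity and then apply Sylvester's determinant theorem \eqref{SYL}.

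By the binomial theorem,
\[
(i-j)^m=\sum_{k=0}^m\binom{m}{k}i^{m-k}(-j)^{k}=\sum_{k=0}^m (-1)^k\binom{m}{k}\, i^{m-k} j^{k},\qquad i,j\in[n].
\]
This suggests defining two $n\times(m+1)$ matrices, with columns indexed by $0\le k\le m$, as follows.

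\begin{itemize}
\item $U(i,k)=i^{m-k}$.
\item $V(j,k)=(-1)^k\binom{m}{k}j^{k}$.
\end{itemize}

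With these choices $(UV^T)(i,j)=(i-j)^m$, and hence $A_n^{(m)}=I_n+UV^T$. By \eqref{SYL} we get $\det(A_n^{(m)})=\det(I_{m+1}+V^TU)$.

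Next we compute the $(m+1)\times(m+1)$ matrix $V^TU$. For $0\le k,l\le m$,
\[
(V^TU)(k,l)=\sum_{j=1}^n (-1)^k\binom{m}{k} j^{k} j^{m-l}=(-1)^k\binom{m}{k}s_{k+m-l}(n).
\]
Renaming $k\to i$ and $l\to j$, this is exactly the off-identity part of $S_n^{(m)}(i,j)$. Therefore $I_{m+1}+V^TU=S_n^{(m)}$, which gives \eqref{e1175}.

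The one point needing care is the convention $0^0$. When $k=0$ the factor $i^{m-k}$ is $i^m$, and when $l=m$ the factor $i^{0}$ equals $1$ for $i\ge1$. Since $i,j\ge1$, no $0^0$ ambiguity arises, and the sum $s_0(n)=n$ is consistent with the definition $s_r(n)=\sum_{k=1}^n k^r$.

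The main obstacle is therefore only bookkeeping. We must place the sign and binomial coefficient in the factor $V$, so that they land in the row index $i$ of $S_n^{(m)}$, matching the stated form $(-1)^i\binom{m}{i}s_{i+m-j}(n)$. The exponents must also combine to $i+m-j$ rather than to the transposed index pattern.
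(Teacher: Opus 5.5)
Your proposal is correct and follows the paper's proof: it uses the same factorization $A_n^{(m)}=I_n+UV^T$ with $U(i,k)=i^{m-k}$ and $V(j,k)=(-1)^k\binom{m}{k}j^k$. It then applies Sylvester's determinant theorem and computes $(V^TU)(i,j)=(-1)^i\binom{m}{i}s_{i+m-j}(n)$, exactly as the paper does.
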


\begin{proof}
We have $A_{n}^{(m)}=I_n+M^{(m)}_n$, where $M^{(m)}_n$ is the square matrix of size $n$ given by \[M^{(m)}_n(i,j)=(i-j)^m,\qquad i,j\in[n].\] For $r\in\NN_0$ set $v_r=(1^r,2^r,\ldots,n^r)^T$. By the binomial theorem,
\[
(i-j)^m=\sum_{k=0}^m (-1)^k\binom{m}{k} i^{m-k} j^k.
\] Thus,
\[
M^{(m)}_n=\sum_{k=0}^m (-1)^k\binom{m}{k} v_{m-k} v_k^T.
\] Let $U$ and $V$ be the two matrices of size $n\times(m+1)$ whose $k$th column is given by
$v_{m-k}$ and $(-1)^k\binom{m}{k}v_k$, respectively, $k=0,1,\dots,m$. Thus, $M^{(m)}_n=UV^T$ and therefore $A_{n}^{(m)}=I_n+UV^T$. By \eqref{SYL},  \[\det(A_{n}^{(m)})=\det(I_{m+1}+V^TU).
\]
Now, for every $0\le i,j\le m$ we have
\[
(V^TU)(i,j)=V_i^TU_j
=\left((-1)^i\binom{m}{i} v_i\right)^T v_{m-j}
=(-1)^i\binom{m}{i}s_{i+m-j}(n).
\] Thus, $S_{n}^{(m)} = I_{m+1}+V^TU$ and we are done.
\end{proof}

The following result confirms the conjectured closed-form formulas in \seqnum{A079034}, \seqnum{A355175}, and \seqnum{A355326}, respectively.  

\begin{corollary}
Let $n\in\NN$. Then 
\begin{align*}
\det(A_{n}^{(1)})&=1+n^2(n^2-1)\frac{1}{12},\\
\det(A_{n}^{(2)})
&=1+n^2(n^2-1)\frac{n^5-5n^3-36n^2+4n+54}{1080},\\
\det(A_{n}^{(3)})
&=1+n^2(n^2-1)\frac{n^{12}-19n^{10}+123n^8-337n^6+12376n^4-44144n^2+40000}{672000}. 
\end{align*}
\end{corollary}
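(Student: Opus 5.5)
By Theorem~\ref{ttT}, each of the three identities reduces to evaluating the determinant of a small matrix, $\det(S_n^{(m)})$ for $m=1,2,3$, of size $2$, $3$ and $4$. The entries of these matrices are power sums $s_r(n)$ with $r$ ranging from $0$ up to $2m$. By \eqref{eq1175} every such power sum is an explicit polynomial in $n$, so each determinant is an explicit polynomial in $n$. The plan is therefore:
\begin{enumerate}
\item Write out the entries $\delta_{ij}+(-1)^i\binom{m}{i}s_{i+m-j}(n)$.
\item Replace each $s_r(n)$ by its Faulhaber polynomial.
\item Expand the determinant.
\item Factor out $n^2(n^2-1)$ from the result minus $1$.
\end{enumerate}

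For $m=1$ everything can be done by hand. Here
\[
S_n^{(1)}=\begin{pmatrix}1+s_1 & s_0\\ -s_2 & 1-s_1\end{pmatrix},
\]
so $\det(S_n^{(1)})=1-s_1^2+s_0s_2$. Substituting $s_0=n$, $s_1=n(n+1)/2$ and $s_2=n(n+1)(2n+1)/6$ gives $1+n^2(n+1)\bigl(\frac{2n+1}{6}-\frac{n+1}{4}\bigr)=1+\frac{n^2(n^2-1)}{12}$, which is the first formula.

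For $m=2$ the matrix is
\[
S_n^{(2)}=\begin{pmatrix}1+s_2 & s_1 & s_0\\ -2s_3 & 1-2s_2 & -2s_1\\ s_4 & s_3 & 1+s_2\end{pmatrix}.
\]
I would expand by the rule of Sarrus, keeping the result in terms of the $s_r$ at first. Some simplifications occur before substitution: the trace of $V^TU$ equals $\sum_k(-1)^k\binom{m}{k}s_m=(1-1)^m s_m=0$ for $m\ge1$, so the linear term vanishes. After that I would substitute the polynomials for $s_0,\dots,s_4$. The resulting polynomial has degree $9$; I would subtract $1$ and divide by $n^2(n^2-1)$, obtaining $(n^5-5n^3-36n^2+4n+54)/1080$.

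For $m=3$ the matrix is $4\times4$ and involves $s_0,\dots,s_6$. The determinant is a polynomial of degree $16$ in $n$. The cleanest route is to write $\det(I+X)=1+e_1(X)+e_2(X)+e_3(X)+e_4(X)$, where $X=V^TU$ and $e_k$ is the sum of the $k\times k$ principal minors of $X$. We already know $e_1=0$.

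\textbf{Verification strategy.} Since both sides of each identity are polynomials in $n$ of known degree ($\le 16$), a rigorous alternative to symbolic expansion is interpolation. One checks the identity $\det(A_n^{(3)})=\text{RHS}$ directly at $17$ distinct values of $n$, for example $n=1,\dots,17$, by computing $\det(S_n^{(3)})$, or even $\det(A_n^{(m)})$ itself, exactly in integers. Agreement at that many points forces the polynomials to be equal. The same argument with $10$ points settles $m=2$. I would present the $m=1$ case by hand and the $m=2,3$ cases by this degree-plus-interpolation argument, possibly accompanied by a short computer-algebra check as the paper does elsewhere.

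\textbf{Expected obstacle.} The main obstacle is the sheer bookkeeping of the $m=3$ expansion. I would handle it through the polynomiality and degree bound rather than through hand expansion. The key point to justify is that $\det(S_n^{(m)})$ is a polynomial in $n$ of degree at most $(m+1)^2$. This follows because the entry in position $(i,j)$ has degree $i+m-j+1$, so each permutation term has total degree $\sum_i(i+m+1-\sigma(i))=(m+1)^2$.
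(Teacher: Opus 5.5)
Your proposal is correct and follows the paper's route. You reduce via Theorem~\ref{ttT} to $\det(S_n^{(m)})$ for $m=1,2,3$, substitute Faulhaber's polynomials and expand, and your hand computation for $m=1$ checks out. The one difference is that for $m=2,3$ you turn the machine check into a rigorous finite verification: you combine the degree bound $\deg D_m\le (m+1)^2$ (which the paper itself proves in the next theorem) with exact evaluation at $(m+1)^2+1$ points, whereas the paper relies on exact symbolic expansion.
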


\begin{proof}
By \eqref{e1175}, 
\begin{align*}
\det(A_{n}^{(1)})&=
\det\begin{pmatrix}
1+s_1(n) & s_0(n)\\
-s_2(n)  & 1-s_1(n)
\end{pmatrix}, \\
\det(A_{n}^{(2)})&=
\det\begin{pmatrix}
1+s_2(n) & s_1(n) & s_0(n)\\
-2s_3(n) & 1-2s_2(n) & -2s_1(n)\\
s_4(n) & s_3(n) & 1+s_2(n)
\end{pmatrix},\\
\det(A_{n}^{(3)})&=
\det\begin{pmatrix}
1+s_3(n) & s_2(n) & s_1(n) & s_0(n)\\
-3s_4(n) & 1-3s_3(n) & -3s_2(n) & -3s_1(n)\\
3s_5(n) & 3s_4(n) & 1+3s_3(n) & 3s_2(n)\\
-s_6(n) & -s_5(n) & -s_4(n) & 1-s_3(n)
\end{pmatrix}.
\end{align*}
Evaluating the determinant in each of the cases, with Faulhaber's formulas and algebraic simplification, the closed-form formulas follow. For additional confirmation, the following Python program verifies the three formulas.

\begin{verbatim}
import sympy as sp

n = sp.symbols("n", integer=True, positive=True)

def s_symbol(r):
    return sp.Symbol(f"s_{r}(n)")

def s_power_poly(r):
    k = sp.symbols("k", integer=True, positive=True)
    return sp.summation(k**r, (k, 1, n))

def S_matrix_with_s_symbols(m):
    S = sp.zeros(m + 1, m + 1)

    for i in range(m + 1):
        for j in range(m + 1):
            delta = 1 if i == j else 0
            S[i, j] = delta + (-1)**i * sp.binomial(m, i) * s_symbol(i + m - j)

    return S

def S_matrix_exact_symbolic(m):
    S = sp.zeros(m + 1, m + 1)

    for i in range(m + 1):
        for j in range(m + 1):
            delta = 1 if i == j else 0
            S[i, j] = delta + (-1)**i * sp.binomial(m, i) * s_power_poly(i + m - j)

    return S

def closed_form(m):
    if m == 1:
        return 1 + n**2 * (n**2 - 1) / 12

    if m == 2:
        return 1 + n**2 * (n**2 - 1) * (
            n**5 - 5 * n**3 - 36 * n**2 + 4 * n + 54 ) / 1080

    if m == 3:
        return 1 + n**2 * (n**2 - 1) * (
            n**12 - 19 * n**10 + 123 * n**8 - 337 * n**6
            + 12376 * n**4 - 44144 * n**2 + 40000 ) / 672000

def verify_symbolically(m):
    print(f"m = {m}")
    print("S_n^(m):")
    print(S_matrix_with_s_symbols(m))

    det_S = sp.factor(S_matrix_exact_symbolic(m).det())
    formula = sp.factor(closed_form(m))
    difference = sp.factor(det_S - formula)

    print("symbolic determinant:")
    print(det_S)

    if difference == 0:
        print("agrees with closed form: yes")
    else:
        print("agrees with closed form: no")        

    print()

for m in [1, 2, 3]:
    verify_symbolically(m)    
\end{verbatim}
\end{proof}

\begin{theorem}
Let $m,n\in\NN$ and consider the matrix $S_n^{(m)}$ defined in Theorem \ref{ttT}. Set $D_m(n)=\det(S_n^{(m)})$. Then there exists a polynomial
$p_m(n)\in\QQ[n]$ of degree $(m+1)^2-4$ such that
\[
D_m(n)=1+n^2(n^2-1)p_m(n).
\]
\end{theorem}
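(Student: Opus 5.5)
The plan is to regard $D_m(n)$ as a polynomial in an indeterminate $n$. By \eqref{eq1175}, each $s_r(n)$ extends to a polynomial in $\QQ[n]$ of degree $r+1$, and so $D_m(n)=\det(S_n^{(m)})\in\QQ[n]$. The claim then splits into two parts:
\begin{enumerate}
\item $D_m(n)-1$ is divisible by $n^2(n-1)(n+1)$, which gives $p_m\in\QQ[n]$ with $D_m(n)=1+n^2(n^2-1)p_m(n)$;
\item $\deg D_m=(m+1)^2$ exactly, which gives $\deg p_m=(m+1)^2-4$, since $(m+1)^2\ge 4>0$.
\end{enumerate}

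\emph{Divisibility.} I would check the roots $n=0,1,-1$ and then the double root at $0$ as follows.
\begin{itemize}
\item At $n=0$: \eqref{eq1175} gives $s_r(0)=0$ for all $r\ge0$, since $s_0(n)=n$. Hence $S_0^{(m)}=I_{m+1}$ and $D_m(0)=1$.
\item At $n=-1$: \eqref{eq1175} gives $0^r+s_r(-1)=0$, so $s_0(-1)=-1$ and $s_r(-1)=0$ for $r\ge1$. The only off-identity entry has $i+m-j=0$, i.e.\ $(i,j)=(0,m)$. Since $m\ge1$ this entry lies off the diagonal, so $S_{-1}^{(m)}$ is unitriangular and $D_m(-1)=1$.
\item At $n=1$: $s_r(1)=1$ for all $r$, so $S_1^{(m)}=I+u\mathbf{1}^T$ with $u_i=(-1)^i\binom{m}{i}$. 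By \eqref{SYL}, $D_m(1)=1+\sum_i(-1)^i\binom mi=1$ because $m\ge1$. This is also consistent with $\det(A_1^{(m)})=1$.
\item Double root at $0$: write $s_r(n)=c_rn+O(n^2)$, so that $S_n^{(m)}=I+nC+O(n^2)$ with $C(i,j)=(-1)^i\binom mi c_{i+m-j}$. Then $D_m(n)=1+n\,\tr(C)+O(n^2)$. The diagonal entries of $C$ all involve the same index $i+m-i=m$, so $\tr(C)=c_m\sum_i(-1)^i\binom mi=0$.
\end{itemize}
Hence $n^2(n-1)(n+1)$ divides $D_m(n)-1$ in $\QQ[n]$.

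\emph{Degree.} The $(i,j)$ entry of $S_n^{(m)}$ has leading term $(-1)^i\binom mi\frac{n^{i+m-j+1}}{i+m-j+1}$; the Kronecker delta does not affect this, since $i+m-j+1\ge1$. For any $\sigma\in S_{m+1}$, the degree of the corresponding product in the Leibniz expansion is
\[
\sum_i (i+m-\sigma(i)+1)=(m+1)^2,
\]
because $\sum_i i=\sum_i\sigma(i)$. Terms using lower-order parts of the entries, or the $\delta_{ij}$, have strictly smaller degree. So the coefficient of $n^{(m+1)^2}$ in $D_m(n)$ is
\[
\prod_{i=0}^m(-1)^i\binom mi\cdot\det\Bigl(\tfrac{1}{(i+1)+(m-j)}\Bigr)_{0\le i,j\le m}.
\]
The remaining determinant is a Cauchy determinant with $a_i=i+1$ and $b_j=m-j$. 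The $a_i$ are pairwise distinct, the $b_j$ are pairwise distinct, and $a_i+b_j\ge1$, so Lemma \ref{Cauchy} shows it is nonzero. The binomial prefactor is also nonzero, hence $\deg D_m=(m+1)^2$.

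I expect the main obstacle to be bookkeeping rather than ideas. One must justify evaluating the polynomial extension of $s_r$ at $n=-1$ and $n=0$ via \eqref{eq1175}, including the $0^r$ convention for $r=0$. One must also argue carefully that the lower-order terms cannot cancel the top coefficient, which the degree count above handles.
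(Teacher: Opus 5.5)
Your proposal is correct and follows the paper's proof essentially step for step: the same evaluations at $n=0,\pm1$, the same trace argument $\tr(C)=c_m(1-1)^m=0$ for the double root at $0$, and the same Leibniz degree count with the leading coefficient identified as a row-scaled Cauchy determinant. The only differences are cosmetic: you verify $D_m(1)=1$ directly through the rank-one identity, whereas the paper cites $\det(A_1^{(m)})=1$, and you use a first-order expansion of $\det(I+nC+O(n^2))$ where the paper invokes Jacobi's formula, which is the same computation.
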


\begin{proof}
Let $r\in\NN_0$. By Faulhaber's formulas, $s_r(n)\in\QQ[n]$. Now, by \eqref{e1175}, $D_m(1)=\det(A_1^{(m)})=1$. Thus, $D_m(n)-1$ is divisible by $n-1$.

By \eqref{eq1175}, $s_r(-1)=0$ for every $r\geq 1$ and therefore $s_{i+m-j}(-1)=0$ for every $0\leq i,j\leq m$, except when $i=0$ and $j=m$, and, in this case, $s_0(-1)=-1$. Consequently, $S_{-1}^{(m)}=I_{m+1}+E$, where $E$ is the square matrix of size $m+1$ given by 
\[
E(i,j)=
\begin{cases}
-1, &\textnormal{if } (i,j)=(0,m),\\
0, &\textnormal{otherwise},
\end{cases} \qquad 0\le i,j\leq m. \]
Hence, $D_m(-1)=\det(S_{-1}^{(m)})=1$ and therefore $D_m(n)-1$ is divisible by $n+1$. This concludes the proof that $n^2-1$ divides $D_m(n)-1$. 

We now wish to prove that $D_m(n)-1$ is divisible by $n^2$. First, notice that since $s_r(0)=0$, we have $S_0^{(m)}=I_{m+1}$. Thus, $D_m(0)=\det(S_0^{(m)})=1$. This shows that $n$ divides $D_m(n)-1$. By Lemma \ref{Jac},
\[
\frac{d}{dn}\det(S_n^{(m)})=\det(S_n^{(m)})\tr\left((S_n^{(m)})^{-1}\frac{d}{dn}S_n^{(m)}\right).
\]
Now,
\[\frac{d}{dn}S_n^{(m)}(i,j)=(-1)^i\binom{m}{i}\frac{d}{dn}s_{i+m-j}(n), \qquad 0\leq i,j\leq m.\] 
Thus,
\begin{align*}
\left(\frac{d}{dn}\det(S_n^{(m)})\right)(n=0)&=\det(I_{m+1})\tr\left(I_{m+1}^{-1}\left(\frac{d}{dn}S_n^{(m)}\right)(n=0)\right)\\
&=\tr\left(\left(\frac{d}{dn}S_n^{(m)}\right)(n=0)\right)\\
&=\left(\frac{d}{dn}s_m(n)\right)(n=0)\sum_{i=0}^{m}(-1)^i\binom{m}{i}\\
&=\left(\frac{d}{dn}s_m(n)\right)(n=0)(1-1)^m\\
&=0.
\end{align*} This concludes the proof that $n^2$ divides $D_m(n)-1$.

We shall now prove that $\deg(p_m(n)) = (m+1)^2-4$. By \eqref{eq1175}, $\deg(s_r(n))=r+1$. Let $\deg(S_n^{(m)})$ denote the matrix whose entries are the degrees in $n$ of the entries of $S_n^{(m)}$, which are polynomials in $n$. Then
\[
\deg(S_n^{(m)})(i,j)=i+m-j+1,\qquad 0\le i,j\le m.\] Let $\pi$ be a permutation of $\{0,1,\ldots,m\}$. We have
\[
\sum_{i=0}^m (i+m-\pi(i)+1)
=\sum_{i=0}^m (i+m+1)-\sum_{i=0}^m \pi(i)=(m+1)^2.
\] It follows that $\deg (D_m(n))\le (m+1)^2$. On the other hand, we have \[[n^{i+m-j+1}](S_n^{(m)})(i,j) = (-1)^i\binom{m}{i}\frac{1}{i+m-j+1}, \qquad 0\leq i,j\leq m.\] Thus, $[n^{(m+1)^2}]D_m(n)=\det(Q)$, where $Q$ is the square matrix of size $m+1$ given by
\[
Q(i,j) = (-1)^i\binom{m}{i}\frac{1}{i+m-j+1}, \qquad {0\le i,j\le m}.\]
The matrix $Q$ is a Cauchy matrix up to nonzero row factors. By Lemma \ref{Cauchy}, $\det(Q)\neq 0$. It follows that $\deg(D_m(n))=(m+1)^2$. Thus, factoring out the term $n^2(n^2-1)$, which has degree $4$, from $D_m(n)-1$, the remaining polynomial $p_m(n)$ has degree $(m+1)^2-4$.
\end{proof}

\subsection{\texorpdfstring{\seqnum{A250742}}{}}

The statement of the following theorem  confirms the conjectures by Hardin stated in \seqnum{A250742}.

\begin{theorem}
Let $n,k\in\NN_0$ and let $T(n,k)$ be the number of binary matrices $X$ of size $(n+1)\times (k+1)$ satisfying the following two conditions:
\begin{enumerate}
\item For every $j\in[k]$ the sequence $(X(i,j)-X(i,j-1))_{i=0}^n$ is nondecreasing in $i$.
\item For every $i\in[n]$ the sequence $(X(i,j)-X(i-1,j))_{j=0}^k$ is nonincreasing in $j$.
\end{enumerate}
Then
$T(n,k)=2^{n+1}+2^{k+1}-2$.
\end{theorem}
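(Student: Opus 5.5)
The plan is to show that conditions 1 and 2 together force $X$ to be additive, and then to count the additive binary matrices directly. First I translate both conditions into statements about the mixed second differences
\[
\Delta(i,j)=X(i,j)-X(i,j-1)-X(i-1,j)+X(i-1,j-1),\qquad i\in[n],\ j\in[k].
\]
Fix $j\in[k]$. The sequence $(X(i,j)-X(i,j-1))_{i=0}^n$ is nondecreasing exactly when the difference of consecutive terms is nonnegative, and that difference is $\Delta(i,j)$. So condition 1 says $\Delta(i,j)\ge 0$ for all $i\in[n]$, $j\in[k]$. In the same way, for fixed $i$, the consecutive differences of $(X(i,j)-X(i-1,j))_{j=0}^k$ are again $\Delta(i,j)$, so condition 2 says $\Delta(i,j)\le 0$. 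Hence the two conditions together are equivalent to $\Delta(i,j)=0$ for all $i\in[n]$, $j\in[k]$.

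Next I show that this vanishing is equivalent to
\[
X(i,j)=X(i,0)+X(0,j)-X(0,0)\qquad\text{for all } i,j.
\]
One direction is immediate: the right-hand side has zero mixed differences. For the other, telescope: $\sum_{i'\le i,\,j'\le j}\Delta(i',j')=X(i,j)-X(i,0)-X(0,j)+X(0,0)$, and the left side is $0$. Thus $T(n,k)$ counts triples $(x,r,c)$ with $x=X(0,0)\in\{0,1\}$, $r=(r_1,\dots,r_n)\in\{0,1\}^n$ and $c=(c_1,\dots,c_k)\in\{0,1\}^k$ (the first column below the corner and the first row after it), subject only to $r_i+c_j-x\in\{0,1\}$ for all $i\in[n]$, $j\in[k]$. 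The constraint $r_i+c_j-x\in\{0,1\}$ for $i=0$ or $j=0$ holds automatically.

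Now count. If $x=0$, the constraint fails exactly when some $r_i=1$ and some $c_j=1$. So either $r\equiv 0$ with $c$ arbitrary, or $c\equiv 0$ with $r$ arbitrary, and these two families overlap only in the all-zero choice. This gives $2^k+2^n-1$ matrices. If $x=1$, the constraint fails exactly when some $r_i=0$ and some $c_j=0$; the complementation $X\mapsto \mathbf 1-X$ gives the same count $2^n+2^k-1$. Summing, $T(n,k)=2^{n+1}+2^{k+1}-2$.

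The degenerate cases $n=0$ or $k=0$ fit the same argument. There the index set for $\Delta$ is empty, both conditions are vacuous, and all $2^{(n+1)(k+1)}$ binary matrices are allowed; this agrees with the formula, for example $2^{n+1}+2-2$ when $k=0$.

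The only step that needs real care is the first one: checking that the consecutive differences in condition 2 are indeed $\Delta(i,j)$ with the correct sign, so that the two inequalities point in opposite directions and cancel. Everything after that is elementary counting.
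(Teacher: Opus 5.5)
Your proposal is correct and follows essentially the same route as the paper: both conditions force the mixed second difference to vanish, which gives $X(i,j)=X(i,0)+X(0,j)-X(0,0)$, and the count then splits on $X(0,0)$ into two classes of size $2^n+2^k-1$. The differences are cosmetic. You telescope in both indices at once, you handle $X(0,0)=1$ by complementing all entries instead of repeating the case analysis, and you check the degenerate cases $n=0$ or $k=0$ explicitly.
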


\begin{proof}
Let $X$ be a matrix satisfying the two conditions and let $i\in[n]$ and $j\in[k]$. Then
\begin{align}
X(i,j)-X(i,j-1)& \geq X(i-1,j)-X(i-1,j-1),\label{e1742}\\
X(i,j)-X(i-1,j)& \leq X(i,j-1)-X(i-1,j-1).\nonumber
\end{align} The second inequality is equivalent to 
\begin{equation}\label{e2742}
X(i,j)- X(i,j-1)\leq X(i-1,j)-X(i-1,j-1).
\end{equation}
From \eqref{e1742} and \eqref{e2742} it follows that
\begin{equation}\label{e3742}
X(i,j)-X(i,j-1)=X(i-1,j)-X(i-1,j-1).
\end{equation}
Let $j'\in[k]$. Iterating \eqref{e3742} over $i=1,2,\ldots,n$ we conclude that 
\begin{equation}\label{e5}
X(i,j')-X(i,j'-1)=X(0,j')-X(0,j'-1), \qquad 0\leq i\leq n.
\end{equation}
Now let $j\in[k]$. Summing \eqref{e5} over $j'\in[j]$ yields
\[
X(i,j)-X(i,0)=X(0,j)-X(0,0).
\]
Equivalently, 
\begin{equation}\label{ez1742}
X(i,j)=X(i,0)+X(0,j)-X(0,0).    
\end{equation}
It follows that every matrix satisfying the two conditions is already determined by its first row and first column. 

Conversely, consider a binary matrix $X$ of size $(n+1)\times(k+1)$ such that \eqref{ez1742} holds for every $0\leq i\leq n$ and $0\leq j\leq k$. Then 
\begin{align*}
&X(i,j)-X(i,j-1)\\
&=X(i,0)+X(0,j)-X(0,0)-(X(i,0)+X(0,j-1)-X(0,0))\\
&=X(0,j)-X(0,j-1).    
\end{align*} Thus, \eqref{e3742} holds and therefore also the two conditions. This concludes the proof that a matrix $X$ satisfies the two conditions if and only if it satisfies the condition \eqref{ez1742}. 

To count the number of binary matrices satisfying \eqref{ez1742}, it suffices to count the number of binary 
vectors $(u_0,\ldots,u_n), (v_0,\ldots,v_k)$, such that $u_0=v_0$ and $u_i+v_j-u_0\in\{0,1\}$ for every $0\le i\le n$ and $0\le j\le k$. We distinguish between the two possibilities for $u_0$.
\begin{enumerate}
\item $u_0=0$. Let $i\in[n],j\in[k]$. Then \[u_i+v_j\in\{0,1\}\iff (u_i,v_j)\neq (1,1).\] Thus, either
$v_1=\cdots=v_k=0$ and $u_1,\dots,u_n$ are arbitrary, yielding $2^n$ possibilities, or $u_1=\cdots=u_n=0$ and $v_1,\dots,v_k$ are arbitrary, yielding $2^k$ possibilities. The intersection of two scenarios consists of the unique possibility $u_1=\cdots=u_n=v_1=\cdots=v_k=0$, yielding a total of $2^n+2^k-1$ possibilities.

\item $u_0=1$. Let $i\in[n], j\in[k]$. Then \[u_i+v_j-1\in\{0,1\}\iff u_i+v_j\in\{1,2\}\iff (u_i,v_j)\neq(0,0).\] Thus, either $v_1=\cdots=v_k=1$ and $u_1,\dots,u_n$ are arbitrary, yielding $2^n$ possibilities, or $u_1=\cdots=u_n=1$ and $v_1,\dots,v_k$ are arbitrary, yielding $2^k$ possibilities. The intersection of two scenarios consists of the unique possibility $u_1=\cdots=u_n=v_1=\cdots=v_k=1$, yielding a total of $2^n+2^k-1$ possibilities.
\end{enumerate}
We conclude that
$T(n,k)=2(2^n+2^k-1)=2^{n+1}+2^{k+1}-2$, as asserted.
\end{proof}

\subsection{\texorpdfstring{\seqnum{A094384}}{}}

Let $m,n,p,q\in\NN$. Let $A$ be a matrix of size $m\times n$ and let $B$ be a matrix of size $p\times q$. Recall (e.g., \cite[Definition 4.2.1]{HJ}) that the Kronecker product of $A$ and $B$, denoted by $A\otimes B$, is defined to be the matrix of size $(mp)\times(nq)$ given by \[
A\otimes B
=
\begin{pmatrix}
A(1,1)B & \cdots & A(1,n)B\\
\vdots & \ddots & \vdots\\
A(m,1)B & \cdots & A(m,n)B
\end{pmatrix}.
\] For $k\in\NN$ the Kronecker $k$th power of a matrix $A$, denoted by $A^{\otimes k}$, is defined inductively by
\[
A^{\otimes k}=\begin{cases}
A, & \textnormal{if } k =1,\\
A\otimes A^{\otimes (k-1)},&\textnormal{if } k \geq 2.
\end{cases}\]

It is well known (e.g., \cite[Lemma 4.2.10]{HJ}) that for every four matrices $A,B,C$, and $D$, whenever the products are defined, we have
\begin{equation}\label{prod}
(AB)\otimes(CD)=(A\otimes C)(B\otimes D).
\end{equation} For a matrix $X$ and $n\in\NN$, let $X^{[n]}$ denote the upper-left square submatrix of size $n$ of $X$.
For $i\in\NN_0$ let $s_2(i)$ denote the number of $1$'s in the binary representation of $i$.

\begin{theorem}\label{t1384}
Let $M(0)=(1)$ and, for $m\in\NN$ let $M(m)$ be the square matrix of size $2^m$ defined by
\[
M(m)=\begin{pmatrix}M(m-1)&-M(m-1)\\-M(m-1)&-M(m-1)\end{pmatrix}.
\] For $m,n\in\NN$ such that $2^m\geq n$ let $M_n= M(m)^{[n]}$. Then 
\[
\det(M_n)=(-2)^{\sum_{i=1}^{n-1} s_2(i)}.
\]
\end{theorem}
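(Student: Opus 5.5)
The plan is to write $M(m)$ as a Kronecker power of a $2\times 2$ matrix, factor that matrix as $LDU$, and lift the factorization to $M(m)$ via \eqref{prod}. The leading principal minors can then be read off a diagonal matrix.

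\emph{Kronecker and LDU structure.} Set $H=\begin{pmatrix}1&-1\\-1&-1\end{pmatrix}$. The block definition of $M(m)$ says exactly that $M(m)=H\otimes M(m-1)$, so by induction $M(m)=H^{\otimes m}$ for $m\ge1$. Next factor $H=LDU$ with
\[
L=\begin{pmatrix}1&0\\-1&1\end{pmatrix},\qquad D=\diag(1,-2),\qquad U=\begin{pmatrix}1&-1\\0&1\end{pmatrix},
\]
which is checked directly. Applying \eqref{prod} repeatedly, by induction on $m$, gives
\[
H^{\otimes m}=L^{\otimes m}D^{\otimes m}U^{\otimes m}.
\]

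\emph{Triangularity.} Next I would check that $L^{\otimes m}$ is lower triangular with all diagonal entries $1$, and that $U^{\otimes m}$ is upper triangular with all diagonal entries $1$. This holds because the Kronecker product of two lower (respectively upper) triangular matrices is again lower (respectively upper) triangular under the standard index ordering. Indeed, the entry at position $(2^{m-1}a+i,\;2^{m-1}b+j)$ is $L(a,b)\,L^{\otimes(m-1)}(i,j)$, and it vanishes unless $a\ge b$; when $a=b$ it vanishes unless $i\ge j$. Similarly, $D^{\otimes m}$ is diagonal. Indexing rows from $0$ to $2^m-1$, its $i$th diagonal entry is the product, over the binary digits of $i$, of $D(b,b)$ for each digit $b$. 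That product is $(-2)^{s_2(i)}$.

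\emph{Leading submatrices.} The key observation is that for $L$ lower triangular, $D$ diagonal and $U$ upper triangular, the upper-left $n\times n$ block of $LDU$ equals $L^{[n]}D^{[n]}U^{[n]}$. This is because the entry $(LDU)(r,c)=\sum_k L(r,k)D(k,k)U(k,c)$ only involves indices $k\le\min(r,c)<n$. Hence
\[
\det(M_n)=\det\left(D^{[n]}\right)=\prod_{i=0}^{n-1}(-2)^{s_2(i)}=(-2)^{\sum_{i=1}^{n-1}s_2(i)},
\]
using $s_2(0)=0$. This also shows the answer is independent of the choice of $m$ with $2^m\ge n$; that independence is consistent with the formula, and it can also be seen directly, since $M(m)$ is the upper-left block of $M(m+1)$.

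The only step needing care is the bookkeeping of the index ordering: one must confirm that the convention $A\otimes A^{\otimes(k-1)}$ makes the first factor correspond to the most significant bit. This affects only which bit is which, not the count $s_2(i)$, so it is not a real obstacle. The argument is otherwise routine.
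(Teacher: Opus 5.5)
Your proposal is correct and follows the paper's proof. Both arguments write $M(m)$ as the $m$th Kronecker power of $\begin{pmatrix}1&-1\\-1&-1\end{pmatrix}$, lift a triangular factorization via \eqref{prod}, note that the leading $n\times n$ block of the product factors into leading blocks, and read the diagonal entries off the binary digits. The only cosmetic difference is that you split off $\diag(1,-2)$ in an $LDU$ form, whereas the paper absorbs it into $U=\begin{pmatrix}1&-1\\0&-2\end{pmatrix}$ in an $LU$ form.
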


\begin{proof}
First, we notice that for every $m\in\NN$, we have $M(m)=K\otimes M(m-1)$, where 
$K=\begin{pmatrix}1&-1\\-1&-1\end{pmatrix}$. By induction, $M(m)=K^{\otimes m}$. The matrix $K$ admits an $LU$-factorization $K=LU$ where 
$L=\begin{pmatrix}1&0\\-1&1\end{pmatrix}$ and $U=
\begin{pmatrix}1&-1\\0&-2\end{pmatrix}$. Using \eqref{prod} we inductively obtain
$K^{\otimes m}=L^{\otimes m}U^{\otimes m}$, for every $m\in\NN$. Clearly, $L^{\otimes m}$ is a lower triangular matrix with diagonal entries all equal to $1$ and $U^{\otimes m}$ is an upper triangular matrix. We claim that for every $m,n\in\NN$ with $2^m\ge n$,
\begin{equation}\label{e9384}
(K^{\otimes m})^{[n]} = (L^{\otimes m})^{[n]}(U^{\otimes m})^{[n]}.
\end{equation}
Indeed, let $i,j\in[n]$. Then 
\[
\left((L^{\otimes m})^{[n]}(U^{\otimes m})^{[n]}\right)(i,j)
=\sum_{t=1}^n L^{\otimes m}(i,t)U^{\otimes m}(t,j).
\]
Let $t\in[n]$. Since $L^{\otimes m}$ is lower triangular, if $t>i$ then $L^{\otimes m}(i,t)=0$. Since $U^{\otimes m}$ is upper triangular, if $t>j$ then $U^{\otimes m}(t,j)=0$. Thus, if $t>\min(i,j)$ then $L^{\otimes m}(i,t)U^{\otimes m}(t,j)=0$. Since $\min(i,j)\leq n$, we have
\[
\sum_{t=1}^{n} L^{\otimes m}(i,t)U^{\otimes m}(t,j)=\sum_{t=1}^{2^m} L^{\otimes m}(i,t)U^{\otimes m}(t,j) = L^{\otimes m}U^{\otimes m}(i,j) = K^{\otimes m}(i,j),
\]
proving \eqref{e9384}. It follows that
\begin{equation}\label{e23384}
\det(M_n)=\det((K^{\otimes m})^{[n]})
=\det((L^{\otimes m})^{[n]})\det((U^{\otimes m})^{[n]})
=\prod_{i=1}^{n}U^{\otimes m}(i,i).
\end{equation} For $m\in\NN$, the matrix $U^{\otimes m}$ is a square matrix of size $2^m$. Thus, indexing the rows and columns of $U^{\otimes m}$ by $0,1,\ldots,2^m-1$, a diagonal element of $U^{\otimes m}$ at position $(i,i)$ is naturally indexed by the binary representation $(b_0,\dots,b_{m-1})$ of $i$, i.e., $i=\sum_{\ell=0}^{m-1} b_\ell 2^\ell$, where $(b_0,\dots,b_{m-1})\in\{0,1\}^m$. We claim that for every $b=(b_0,\dots,b_{m-1})\in\{0,1\}^m$ we have
\begin{equation}\label{e10}
U^{\otimes m}(b,b)=\prod_{\ell=0}^{m-1} U(b_\ell, b_\ell).
\end{equation}
To see this, index the rows and columns of $U$ by $0$ and $1$. For $m=1$ let $b\in\{0,1\}^1$. Then
\[
U^{\otimes 1}(b,b)=U(b,b)=U_{b_0,b_0},
\] as it should. Now, assume that \eqref{e10} holds for some $m\in\NN$ and let $b=(b_0,\ldots,b_m)\in\{0,1\}^{m+1}$. We have
$U^{\otimes (m+1)}=U\otimes U^{\otimes m}$. Writing $b=(b_0,b')$ with $b_0\in\{0,1\}$ and
$b'=(b_1,\ldots,b_m)\in\{0,1\}^m$, and using the induction hypothesis, we have 
\[
U^{\otimes (m+1)}((b_0,b'),(b_0,b'))
=
U(b_0,b_0) U^{\otimes m}(b',b')=U(b_0,b_0)\prod_{\ell=1}^{m} U(b_\ell, b_\ell)=\prod_{\ell=0}^m U(b_\ell, b_\ell).
\] Since $U(0,0)=1$ and $U(1,1)=-2$, it follows from \eqref{e10} that for every $i\in\{0,1,\ldots,2^m-1\}$ with binary representation $(b_0,\ldots,b_{m-1})\in\{0,1\}^m$, we have
\begin{equation}\label{xx384}
U^{\otimes m}(i,i)=U^{\otimes m}(b,b)=\prod_{\ell=0}^{m-1} U(b_\ell, b_\ell) = (-2)^{s_2(i)}.
\end{equation} We now turn back to the standard indexing of the columns and rows of $U^{\otimes m}$. By \eqref{e23384} and \eqref{xx384}, 
\[
\det(M_n)=\prod_{i=1}^n U^{\otimes m}(i,i)
=\prod_{i=1}^n (-2)^{s_2(i-1)}
=(-2)^{\sum_{i=1}^n s_2(i-1)}=(-2)^{\sum_{i=1}^{n-1} s_2(i)}.\qedhere
\]
\end{proof}

\section*{Declaration of generative AI and AI-assisted technologies in the manuscript preparation process}

During the preparation of this work, the author used ChatGPT in order to improve the language, clarity, and presentation of parts of the manuscript. After using this tool, the author reviewed and edited the content as needed and takes full responsibility for the content of the published article.

\end{document}